\newtheorem{theorem}{Theorem}
\newtheorem{lemma}[theorem]{Lemma} 
\newtheorem{cor}[theorem]{Corollary}
\newtheorem{prop}[theorem]{Proposition}
\newtheorem{defi}{Definition}
\DeclareMathOperator{\Pf}{Pf}
\def\N{\mathbb N}
\def\la{\lambda}
\DeclareMathOperator{\cross}{\textrm{cross}}
\DeclareMathOperator{\nest}{\textrm{nest}}
\begin{document}

\thispagestyle{empty}

%
%
%
%
%
\title{A one-parameter generalization of Pfaffians}
\author{T.~Eisenk\"olbl$^1$
  \and M.~Ishikawa$^2$
  \and J.~Zeng$^3$}

\subjclass[2010]{Primary~05A05 Secondary~05A19, 05A15.}


\address{$^{1,3}$ Universit\'e Lyon, Universit\'e Lyon 1, Institut
  Camille Jordan, CNRS UMR 5208, 43, blvd du 11 novembre 1918, 69622
  Villeurbanne cedex, France\newline
\indent  $^2$ Department of Mathematics, Faculty of Education,
  University of the Ryukyus, Nishihara, Okinawa, Japan\newline
\indent Email: {\tt eisenkoelbl@math.univ-lyon1.fr, ishikawa@edu.u-ryukyu.ac.jp,
  zeng@math.univ-lyon1.fr.} }

\keywords{Pfaffians, Dodgson condensation, Perfect matchings.}

%
%
%
%
\begin{abstract}
%
In analogy to the definition of the lambda-determinant, we define a one-parameter deformation 
of the Dodgson condensation formula for Pfaffians. We prove that the resulting
rational function is a polynomial with weights given
by the crossings and nestings of perfect matchings and prove several
identities and closed-form evaluations.

%
\end{abstract}

\maketitle
%
%
%
%
%
\begin{section}{A recurrence relation for the generalized Pfaffians}
The $\lambda$-determinant \cite{RR} is defined by a generalization of
the Dodgson condensation formula (or the Desnanot-Jacobi formula) \cite{Br,Do,IO,IW}
\[
\det A\det A_{1,n}^{1,n}=\det A_{1}^{1}\det A_{n}^{n}-\det A_{1}^{n}\det A_{n}^{1}
\]
for a square matrix $A$ of size $n$,
where $A_{j_1,\dots,j_r}^{i_1,\dots,i_r}$ stands for the square matrix of size $n-r$
obtained from $A$ by deleting the rows $i_1,\dots,i_r$ and the columns $j_1,\dots,j_r$.
In \cite{Ze}, Zeilberger gave an amusing algorithmic proof for the identity, 
and recently two more combinatorial proofs appeared in \cite{Ay, Fu}.
In the paper \cite{RR}, Robbins and Rumsey used the $\lambda$-Dodgson condensation
\[
\det A\det A_{1,n}^{1,n}=\det A_{1}^{1}\det A_{n}^{n}-\lambda\det A_{1}^{n}\det A_{n}^{1}
\]
to define the $\lambda$-determinant
and established a weighted summation formula indexed by alternating sign matrices (ASMs) of size $n$,
which had a great impact on combinatorics, representation theory
and theoretical physics (see, for example, \cite{Br}) in  the last three decades.
Our motivation was to find a similar generalization of the Pfaffian Dodgson condensation
\cite{IO,IW,Kn}
\begin{multline}
\Pf(A)\Pf(A^{(1,2,2n-1,2n)})
=\Pf(A^{(1,2)})\Pf(A^{(2n-1,2n)})\\
-\Pf(A^{(1,2n-1)})\Pf(A^{(2,2n)})
+\Pf(A^{(1,2n)})\Pf(A^{(2,2n-1)}),
\label{eq:Pf-Dodgson}
\end{multline}
for a skew-symmetric matrix $A$ of size $2n$,
where we write $A^{(i_1,\dots,i_r)}$ for $A_{i_1,\dots,i_r}^{i_1,\dots,i_r}$.

We define the following one-parameter generalization:

\begin{defi}
The $\lambda$-Pfaffian $\Pf_{\la} A$ of a $2n \times 2n$ skew-symmetric
matrix $A$ is defined by the initial values
$$\Pf_{\la} ( () ) = 1,$$
$$\Pf_{\la}\left(\begin{pmatrix}0& a \\ -a& 0 \end{pmatrix} \right) =a,$$
and the recurrence
\begin{multline} \label{eq:lambdarec} 
\Pf_{\la} (A^{(1,2,2n-1,2n)})\Pf_{\la} (A)
=\Pf_{\la} (A^{(1,2)})\Pf_{\la} (A^{(2n-1,2n)})\\-\la \Pf_{\la} (A^{(1,2n-1)})\Pf_{\la} (A^{(2,2n)})+\la \Pf_{\la} (A^{(1,2n)})\Pf_{\la} (A^{(2,2n-1)}).
\end{multline}
for a skew-symmetric matrix $A$ of size $2n$,
where we write $A^{(i_1,\dots,i_r)}$ for $A_{i_1,\dots,i_r}^{i_1,\dots,i_r}$.
\end{defi}

In the special case $\lambda=1$, we recover the ordinary definition of a Pfaffian
so that we simply write $\Pf(A)$ for $\Pf_1(A)$.
Since a skew-symmetric matrix is completely determined by its upper triangle part,
we can safely denote  its ($\la$-)Pfaffian  by $\Pf_{\la}(A_{i,j})_{1\leq i<j\leq2n}$.

\par\smallskip
%


We need the following definitions of crossings and nestings of perfect
matchings to state Theorem~\ref{th:lambdarec}:

Let $\mathcal M(S)$ be the set of perfect matchings of a given finite set
$S \subset \N$. 
Any perfect matching can be written in the form $m=((m_1,m_2), \dots,
(m_{2n-1},m_{2n}))$, where $\{m_1,m_2,\dots, m_{2n}\} =S $, each pair
is sorted in increasing order and the first elements of each pair are
also sorted in increasing order, and let 
$\mathcal M([2n]) = \mathcal M (\left\{1,2, \dots, 2n\right\})$.
For a perfect matching $m$, the number $cross(m)$ of crossings is the number of pairs of pairs $(a,b)$ and $(c,d)$ in $m$ with $a < c < b < d$. Similarly, the number $nest(m)$ of nestings is the number of pairs of pairs $(a,b)$ and $(c,d)$ with $a < c < d < b$
 (see \cite{KZ}).


Now, we can give the explicit description of the coefficients of $\Pf_{\la}(A)$:
%
%
%
%
%
%
%

 \begin{theorem} \label{th:lambdarec}
The $\la$-Pfaffian of $A$  defined by the recurrence
relation~\eqref{eq:lambdarec} equals

$$\Pf_{\la}(A)=\sum_{m \in \mathcal M(2n)} (-1)^{\cross(m)}  \la^{\cross(m)+\nest(m)}
\prod_{i=1}^n A_{m_{2i-1}m_{2i}}.$$
\end{theorem}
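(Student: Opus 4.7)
The plan is to proceed by induction on $n$. Write $\widetilde{\Pf}_{\la}(A)$ for the right-hand side of the claimed formula. The base cases $n=0$ and $n=1$ are trivial since the unique matchings involved have no crossings and no nestings. For the inductive step it suffices to show that $\widetilde{\Pf}_{\la}$ satisfies the defining recurrence~\eqref{eq:lambdarec}, since the recurrence together with the initial conditions determines $\Pf_{\la}$ uniquely as a rational function of the $A_{ij}$'s. Thus the whole task reduces to the polynomial identity
\begin{multline*}
\widetilde{\Pf}_{\la}(A)\,\widetilde{\Pf}_{\la}(A^{(1,2,2n-1,2n)})
=\widetilde{\Pf}_{\la}(A^{(1,2)})\,\widetilde{\Pf}_{\la}(A^{(2n-1,2n)})\\
-\la\,\widetilde{\Pf}_{\la}(A^{(1,2n-1)})\,\widetilde{\Pf}_{\la}(A^{(2,2n)})
+\la\,\widetilde{\Pf}_{\la}(A^{(1,2n)})\,\widetilde{\Pf}_{\la}(A^{(2,2n-1)}).
\end{multline*}

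Both sides expand into sums over pairs of perfect matchings. On the left a typical term is indexed by $(m,m')$ with $m\in\mathcal M([2n])$ and $m'\in\mathcal M(\{3,\dots,2n-2\})$; on the right the four terms are indexed by pairs of matchings on two complementary subsets of $[2n]$. The natural bookkeeping device is to consider the multigraph $G=m\cup m'$: the four distinguished vertices $\{1,2,2n-1,2n\}$ have degree one and the interior vertices $\{3,\dots,2n-2\}$ have degree two, so $G$ is a disjoint union of two paths between distinguished endpoints and some cycles through interior vertices. The three possible pairings of endpoints give three path-types: (A) $\{1,2\}$ and $\{2n-1,2n\}$; (B) $\{1,2n-1\}$ and $\{2,2n\}$; (C) $\{1,2n\}$ and $\{2,2n-1\}$. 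The same analysis applies to each right-hand term, and the parity of path-lengths is forced by which ``colour'' (i.e.\ which of the two matchings) occupies each endpoint. The goal is then to compare, multigraph by multigraph, the total signed $\la$-weighted contribution on both sides.

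The key step is to show, for each multigraph $G$ of the above shape, that the contributions to the two sides agree. The LHS decompositions of $G$ are in bijection with the Type~A decompositions from $\widetilde{\Pf}_{\la}(A^{(1,2)})\widetilde{\Pf}_{\la}(A^{(2n-1,2n)})$, Type~B from the $-\la$ term, and Type~C from the $+\la$ term; once the path-type is fixed there are exactly $2^c$ legal two-colourings of $G$, where $c$ is the number of cycles. I would track separately the crossings and nestings \emph{internal} to each component (cycle or path) and the ones produced by pairs of components, show that cycle-internal contributions satisfy an ``exchange'' identity making the $2^c$ colourings collapse to a single weighted term, and finally compare how the four distinguished vertices contribute when they are reshuffled between Type~A, B and C. The extra factor $\la$ attached to Types~B and~C and the sign $(-1)^{\cross}$ should exactly account for the one new ``big'' crossing or nesting produced when a path connects the left block $\{1,2\}$ to the right block $\{2n-1,2n\}$.

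The hard part will be the last comparison: verifying that the sign and the power of $\la$ change in precisely the right way under the recoloring of cycles and under the passage between path-types. This amounts to a delicate counting of how $\cross$ and $\nest$ behave when one replaces two matchings $(m,m')$ on $([2n],\{3,\dots,2n{-}2\})$ by two matchings on one of the other three pairs of index sets. If this bookkeeping is carried out cleanly—most likely via a local involution on cycles that is manifestly weight-preserving, together with an explicit check at the path endpoints—the identity reduces to a finite, case-by-case calculation on small subconfigurations and the theorem follows by induction.
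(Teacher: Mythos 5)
Your overall strategy---expand both sides of \eqref{eq:lambdarec} over pairs of matchings, superpose each pair into a multigraph consisting of two paths with endpoints in $\{1,2,2n-1,2n\}$ and some cycles, and compare contributions by recolouring cycles and analysing the endpoints---is exactly the structure of the paper's proof, which packages it as a sign-reversing involution that flips the cycle through the vertex $1$ after closing the two paths with auxiliary edges. However, as written your argument has a genuine gap: the two facts on which everything rests are hoped for rather than established. First, you need that recolouring a cycle (exchanging the roles of the two matchings on its edges) preserves the weight $(-1)^{\cross}\la^{\cross+\nest}$. The $\la$-exponent part of this is not a formal ``exchange identity''; the reason it holds is the observation that
$$\cross(m)+\nest(m)=\tfrac12\sum_{i=1}^n\bigl(m_{2i}-m_{2i-1}-1\bigr),$$
so that the total exponent $\cross(\pi)+\nest(\pi)+\cross(m)+\nest(m)$ depends only on the multiset of arcs of the superposition and not on how the arcs are distributed between the two matchings. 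The sign part requires the separate geometric fact that two distinct closed curves in the diagram cross an even number of times, so flipping one cycle cannot change the parity of the total number of crossings. Without these two statements your ``local involution on cycles that is manifestly weight-preserving'' is not manifest.

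Second, the ``last comparison'' that you defer is not a detail but the actual content of the proof: for each of the three pairings of the endpoints and each of the two possible parities of the path lengths one must count exactly how many crossings and nestings the arcs incident to $1,2,2n-1,2n$ gain or lose when a term migrates from one product to another, and check that the net change in $(-1)^{\cross}\la^{\cross+\nest}$ is precisely what is needed to match the coefficients $1,-\la,+\la$. This is a six-case computation (the paper's Cases 1--6), and it is where the statement could in principle fail for a differently chosen weight. There is also a parity subtlety you do not address: when the two paths have even length their two endpoints are covered by different matchings, the relevant cancellations then occur among the right-hand-side products only, and in some of these configurations the arcs created at the distinguished vertices share a point, which forces a convention (the paper counts such a pair as half a nesting) for the arc-length formula above to remain applicable. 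Until these verifications are carried out, what you have is a correct plan, essentially the paper's, rather than a proof.
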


Since every term in the expansion of the weighted sum is indexed
by matchings, we will give an explicit bijection between the pairs of
matchings corresponding to terms in the equation~\eqref{eq:lambdarec}.
In this sense, our proof gives another combinatorial proof of the Dodgson condensation formula 
by putting $\lambda=1$ and appealing to the fundamental relation between Pfaffians and determinants
(see \cite{IO,IW,Ste}), which is also the $\lambda=1$ case of Proposition~\ref{th:fundamental},
but our case is much more general.

The rest of this paper is organized as follows.
In Section~\ref{sec:proof} we give a purely combinatorial proof of Theorem~\ref{th:lambdarec}
which uses an involution.
In Section~3 we state and proof several properties of $\lambda$-Pfaffians,
which generalize several classical identities for Pfaffians.

%
%
%
%
%
%
\section{Proof of the main theorem}
\label{sec:proof}
In this section we give a combinatorial proof of Theorem~\ref{th:lambdarec}.
The following lemma will be needed to check that this bijection indeed
conserves the weight.
\begin{lemma} \label{lem:weightarc}
Let $m=((m_1,m_2), \dots, (m_{2n-1},m_{2n}))$ be a perfect matching of
$\{1,2,\dots, 2n\} $ with $m_{2i-1}<m_{2i}$. 
Then we have
$$\cross(m)+\nest(m) = \frac 12 \sum_{i=1}^n (m_{2i}-m_{2i-1}-1).$$
\end{lemma}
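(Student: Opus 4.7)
The plan is a double-counting argument. For each arc $\alpha=(a_\alpha,b_\alpha)$ of $m$, with $a_\alpha<b_\alpha$, the quantity $b_\alpha-a_\alpha-1$ counts the elements $c\in\{1,\dots,2n\}$ strictly between $a_\alpha$ and $b_\alpha$. Since every such $c$ is an endpoint of exactly one other arc $\beta$, I would rewrite
\[
\sum_{i=1}^{n}(m_{2i}-m_{2i-1}-1)=\sum_{\alpha}\bigl|\{c:a_\alpha<c<b_\alpha\}\bigr|=\sum_{\alpha\ne\beta}\bigl|\{a_\beta,b_\beta\}\cap(a_\alpha,b_\alpha)\bigr|,
\]
where the final sum runs over ordered pairs of distinct arcs of $m$.

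Next I would group that double sum by unordered pairs $\{\alpha,\beta\}$. Assuming without loss of generality $a_\alpha<a_\beta$, a case analysis on the relative order of $a_\alpha,b_\alpha,a_\beta,b_\beta$ pins down the contribution of each pair. If the two arcs are disjoint ($a_\alpha<b_\alpha<a_\beta<b_\beta$), neither endpoint of either arc lies in the open interval of the other, so the contribution is~$0$. If they cross ($a_\alpha<a_\beta<b_\alpha<b_\beta$), then $a_\beta\in(a_\alpha,b_\alpha)$ and $b_\alpha\in(a_\beta,b_\beta)$, contributing~$2$. If they nest ($a_\alpha<a_\beta<b_\beta<b_\alpha$), both $a_\beta$ and $b_\beta$ lie in $(a_\alpha,b_\alpha)$ while neither of $a_\alpha,b_\alpha$ lies in $(a_\beta,b_\beta)$, again giving~$2$.

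Summing these contributions yields $\sum_{i=1}^n(m_{2i}-m_{2i-1}-1)=2\bigl(\cross(m)+\nest(m)\bigr)$, and dividing by~$2$ gives the claim. I do not expect any real obstacle: the proof reduces to the observation that a crossing or a nesting of two arcs produces exactly two ``interior incidences'' while a disjoint pair produces none. An alternative, arc-by-arc, formulation would say that for a fixed arc $\alpha$ the interior points of $\alpha$ split between arcs crossing $\alpha$ (one endpoint each) and arcs nested inside $\alpha$ (both endpoints contributing), but the pairwise grouping above is cleaner because the factor of~$2$ drops out automatically.
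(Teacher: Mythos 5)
Your proof is correct and is essentially the same double-counting argument as the paper's: both interpret $\sum_i(m_{2i}-m_{2i-1}-1)$ as counting interior incidences between arcs and observe that a crossing or nesting pair contributes exactly two such incidences while a disjoint pair contributes none. Your version merely makes the grouping by unordered pairs of arcs and the three-way case analysis slightly more explicit.
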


\begin{proof} 
The sum on the right-hand side counts for each pair in the
matching the number of integers that are between the two members of
the pair.

But each such interior point of a given pair $p$ has to be either a member of
a pair that crosses $p$ or that stays inside $p$ and is therefore
involved in exactly one set of crossing pairs or nesting pairs.

Conversely, each set of crossing pairs $(a,b)$ and $(c,d)$ with $a< c
< b < d$  is counted twice in the sum, once for the point $c$ inside
the pair $(a,b)$ and once for the point $b$ inside the pair $(c,d)$.

And each set of nesting pairs $(a,b)$ and $(c,d)$ with $a<c<d<b$ is
also counted twice in the sum, for the points $c$ and $d$ inside the pair
$(a,b)$.

Therefore, dividing by two gives exactly the weight $cross(m)+nest(m)$.
\end{proof}

Now, we are in the position to prove  the theorem, which is equivalent to
\begin{multline} \label{eq:lambdaeq}
\Pf_{\la} (A^{(1,2,2n-1,2n)})\Pf_{\la} (A)-\Pf_{\la} (A^{(1,2)})\Pf_{\la}
(A^{(2n-1,2n)})\\+\la \Pf_{\la} (A^{(1,2n-1)})\Pf_{\la} (A^{(2,2n)})-\la
\Pf_{\la} (A^{(1,2n)})\Pf_{\la} (A^{(2,2n-1)})=0.
\end{multline}

Each term in a product of two generalized Pfaffians is indexed by a
pair of matchings. More precisely, the terms in
Equation~\eqref{eq:lambdaeq} are indexed by the set

\begin{multline}{\label{eq:setdef}}
M= \mathcal M([2n]\setminus \left\{1,2,2n-1,2n \right\})
\times
\mathcal M(([2n]) \\
\overset{+}{\cup}
\mathcal M(([2n]\setminus\left\{1,2\right\})
\times
\mathcal M(([2n]\setminus\left\{2n-1,2n \right\} )\\
\overset{+}{\cup}
\mathcal M(([2n]\setminus\left\{1,2n-1 \right\} )
\times
\mathcal M(([2n]\setminus\left\{2,2n \right\} ) \\
\overset{+}{\cup}
\mathcal M(([2n]\setminus\left\{1,2n \right\} )
\times
\mathcal M(([2n]\setminus\left\{2,2n-1 \right\} ).
\end{multline}

We define an involution $\Phi$ on the set $M$ and show that it pairs
off terms that add to zero.

\begin{figure}[t]
\begin{center}
\leavevmode
\includegraphics[width=0.40\textwidth]{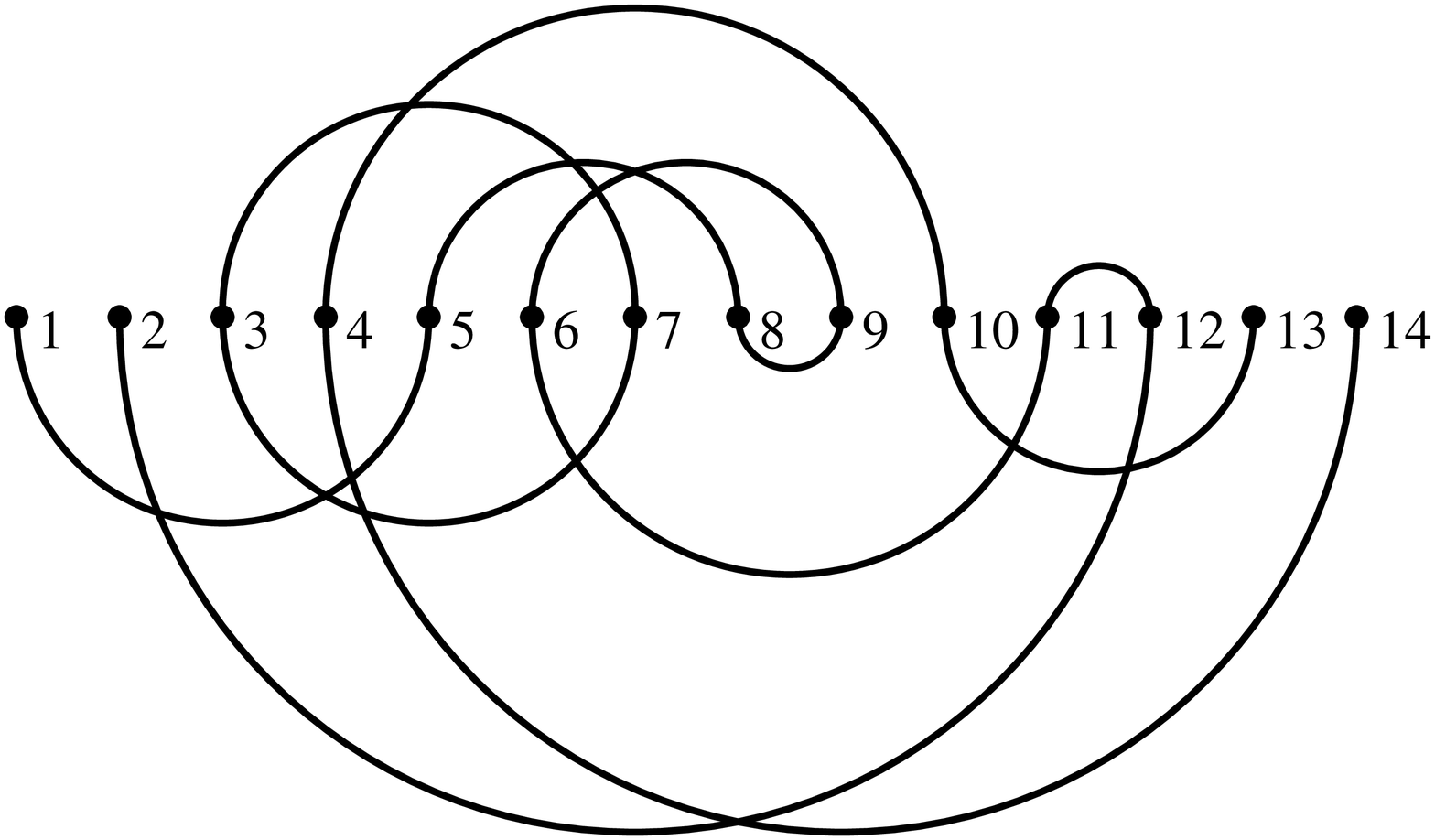}
\includegraphics[width=0.40\textwidth]{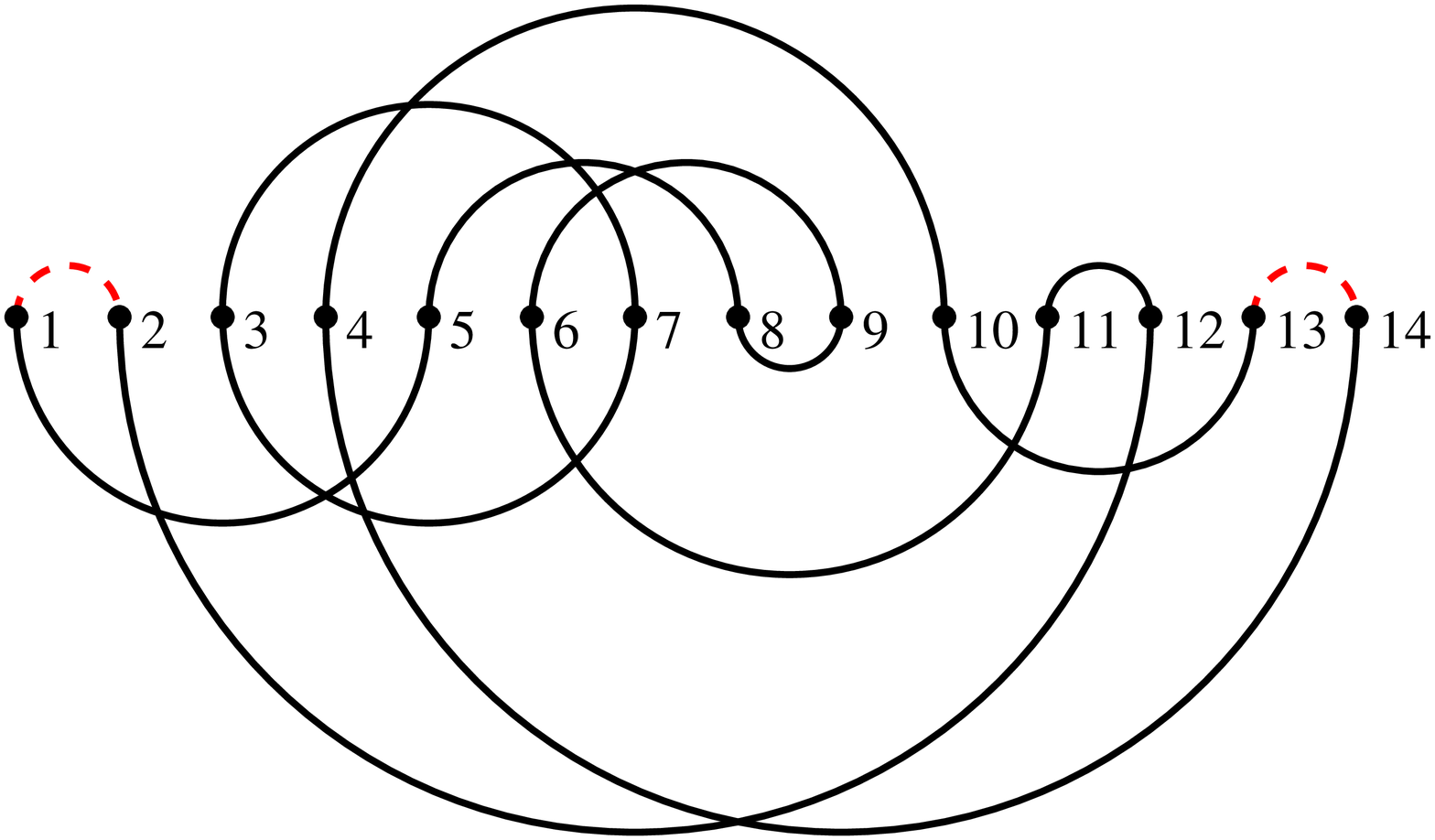}\\
\includegraphics[width=0.40\textwidth]{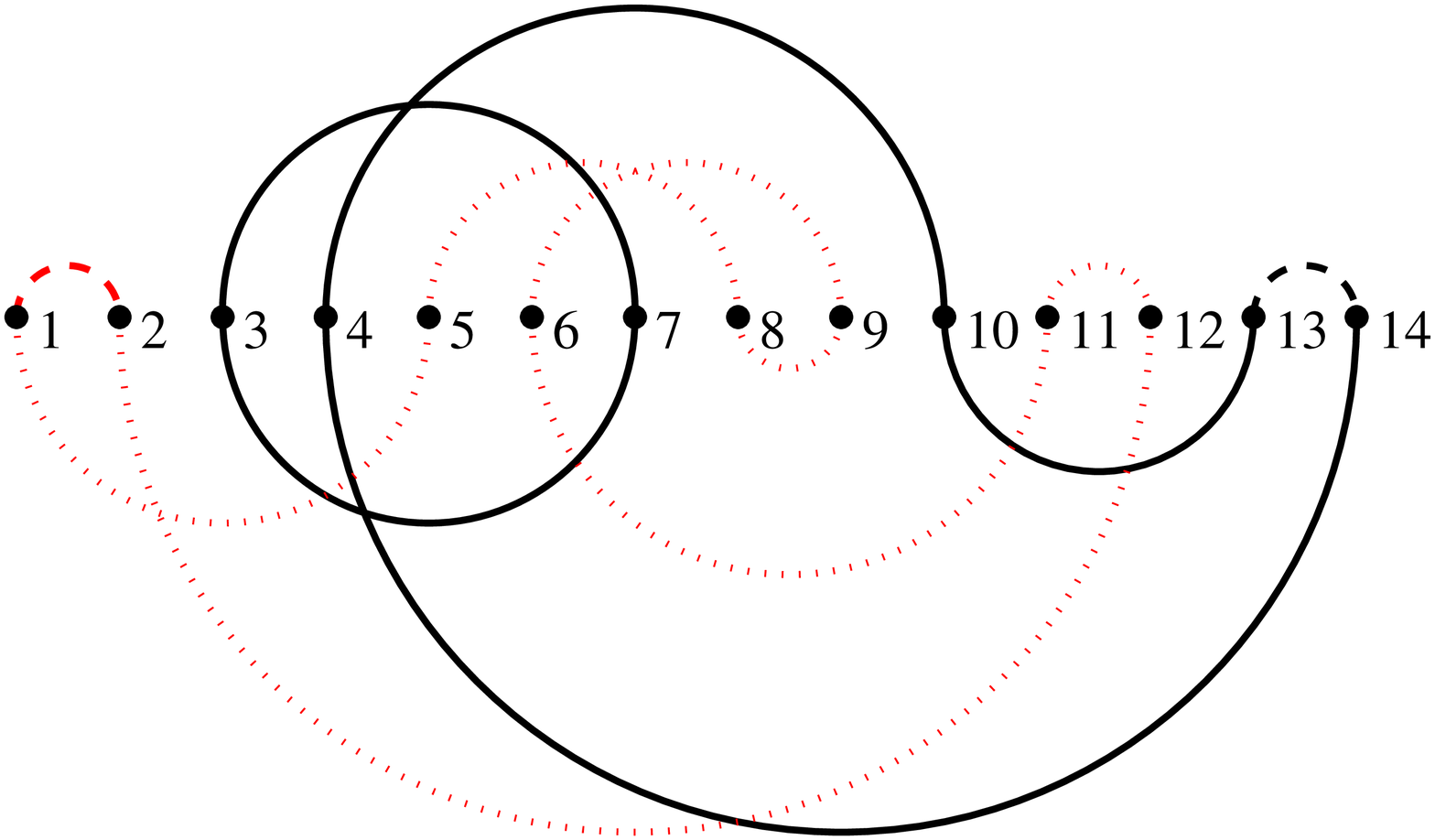}
\includegraphics[width=0.40\textwidth]{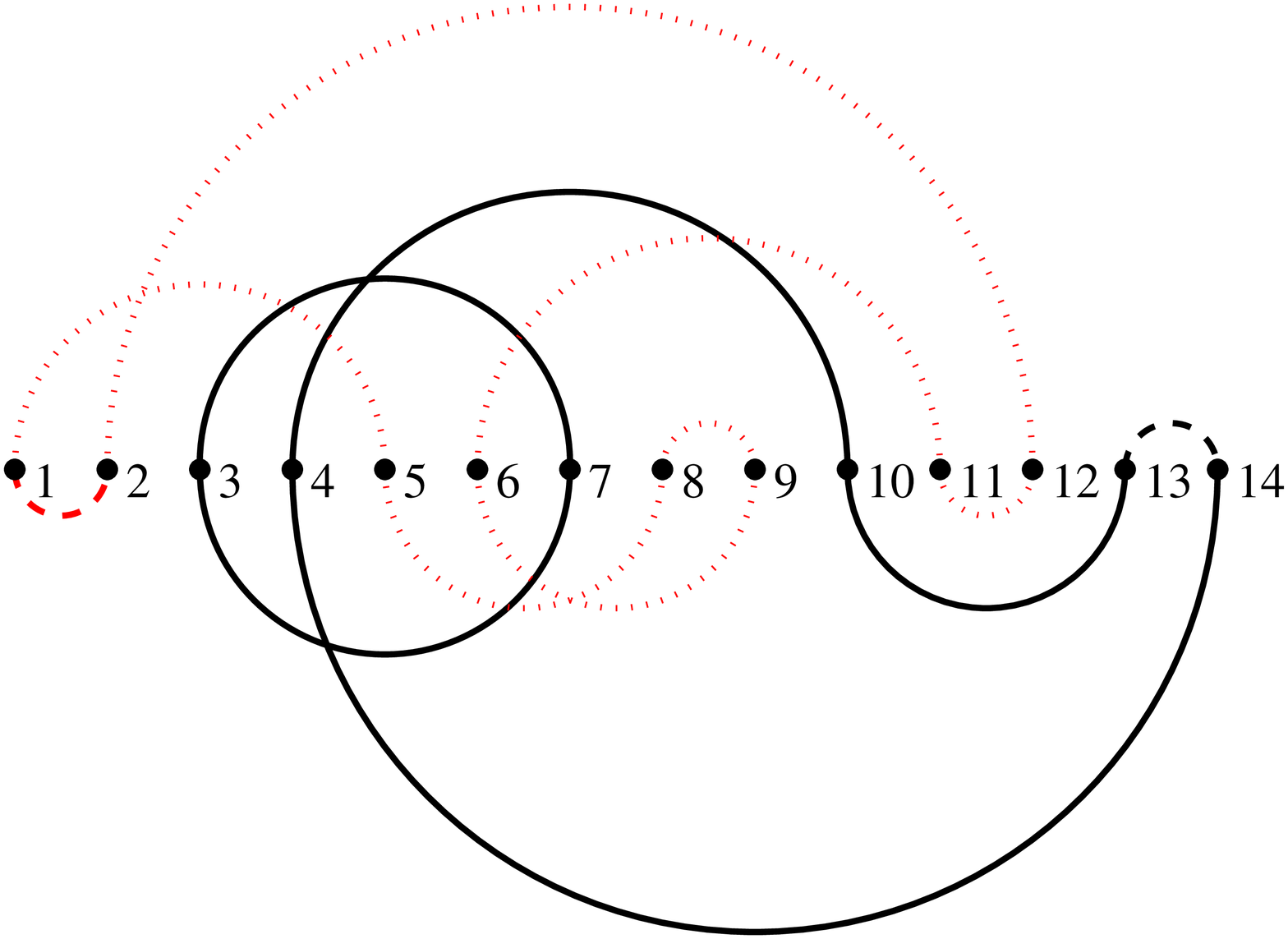}
\end{center}
\caption{The bijection on a pair of matchings}
\label{fig:bij}
\end{figure}


For a pair of matchings $(\pi, m) \in M$, we define
$\Phi(\pi,m)$ with the steps below.
Our running example will be $n=7$, $\pi =
((3,7),(4,10),(5,8),(6,9),(11,12)) \in \mathcal M([14] \setminus
\{1,2,13,14\})$ and $m =((1,5),(2,12),(3,7),(4,14), (6,11), (8,9),
(10,13)) \in \mathcal M ([14] )$.

{\em Step 1: Superpose the two matchings and add two additional pairs}

We represent the points $1,2,\dots, 2n$ on the horizontal line,
the pairs in $\pi$ as semicircles above the line and the pairs in $m$
as semicircles below the line (see the first image in
Figure~\ref{fig:bij}).

The superposition contains cycles and two paths starting and ending at
$1,2,2n-1,2n$. 

Note that the lengths of the two paths always have the same parity because the lengths of the cycles are necessarily even.

For a path running from $a$ and $b$, we would like to add the edge $(a,b)$. If the length of each path is odd, we can simply add the edges $(a,b)$ to the matching that does not yet contain $a$ and $b$.
This gives us two perfect matchings of the set $[2n]$
(see the two dashed lines in the second image in Figure~\ref{fig:bij})).

In our example, the two paths in the superpositions are
$1,5,8,9,6,11,12,2$ and $13,10,4,14$, so the additional edges $(1,2)$
and $(13,14)$ are added
to $\pi$ and we
get the two matchings
$$\pi'=((1,2),(3,7),(4,10),(5,8),(6,9),(11,12),(13,14))$$ and
$$m=((1,5),(2,12),(3,7),(4,14),(6,11),(8,9),(10,13)).$$

However, if the length of each path is even, then we add an additional dummy vertex $b'$ for each edge $(a,b)$ and we add the edges $(a,b')$ to the matchings that does not yet contain $a$ and we add the edges $(b',b)$ to the matching that does not yet contain $b$.
(See Figure~\ref{fig:bijright} for an 
example.)

{\em Step 2: Flip the cycle containing 1}

With the additional edges, we have two perfect matchings and the
superposition is a set of cycles.  Now,
we choose the cycle that contains 1.

In our example, this is the cycle $(1,5,8,9,6,11,12,2)$ (see the red
cycle in the third image in Figure~\ref{fig:bij}).

In this cycle only, we change the roles of the two matchings. This
corresponds to a reflection with respect to the horizontal axis in the
graphical representation (see the fourth image in
Figure~\ref{fig:bij}).

Note that the two additional edges are in different cycles by
construction, so the additional edge containing 1 is moved
from the matching $\pi'$ to the matching $m$, but not the other.

{\em Step 3: Remove the additional edges and switch the two perfect
matchings}

Now, we can remove the additional edges again, switch the two matchings and
find two new matchings $\tilde\pi$ and $\tilde m$.

If a new vertex was added in Step 1, this vertex is now removed along
with the additional edges.

In our example, this gives
$$\tilde\pi=((3,7),(4,14),(5,8),(6,9),(10,13),(11,12))$$ and
$$\tilde m=((1,5),(2,12),(3,7),(4,10),(6,11),(8,9)).$$

{\bf Claim 1: $\Phi$ is an involution.}
Since the cycle containing 1 remains the cycle containing 1, this just
means that we repeat the same reflection twice if we apply $\Phi$ twice.

{\bf Claim 2: Switching the two perfect matchings preserves the weight}
Each term in the product of two generalized Pfaffians is of the form
$w(\pi)w(m)$, so interchanging the two matchings does not change the
product.

{\bf Claim 3: Reflecting a cycle preserves the weight}

The number of crossings corresponds exactly to the intersections of
arcs in the graphical representation. It is easy to see that two
circles intersect an even number of times because a circles has to
enter and leave the interior of the other circle the same number of
times.

Since reflecting does not change the number of crossings of a cycle
with itself,  we already see that the parity of the number of
crossings is conserved, so $(-1)^{cross(\pi)+cross(m)}$ does not change.

We have already seen in Lemma~{\ref{lem:weightarc}} that the sum
$cross(m)+nest(m)$ only depends on the number of interior points of an
arc. Since the reflection does not change the total set of arcs, the
weight $cross(\pi)+nest(\pi)+cross(m)+nest(m)$ remains unchanged.

Note that in some cases, adding the two edges will create edges that
share a point (see Figure~\ref{fig:bijrightcross} for an example). But the
arguments in Lemma~\ref{lem:weightarc} remain true if we count nested
arcs that share a point as half a nesting.

\begin{lemma}
Adding and removing the two edges will change the weight in
  the way that will make the two terms cancel.
\end{lemma}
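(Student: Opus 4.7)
The plan is to verify term-by-term that the coefficient-weighted contribution of each pair $(\pi, m)$ to \eqref{eq:lambdaeq} is cancelled by the contribution of its image $\Phi(\pi, m)$. By Claims~2 and~3, the flip-and-switch step preserves the weight of the extended pair $(\pi', m')$ that contains both added edges, so the only source of weight discrepancy between $(\pi, m)$ and $\Phi(\pi, m)$ is that the pair of edges removed on the image side need not coincide with the pair added on the original side. It therefore suffices to compare the product of $A$-entries and $\la$-powers that are lost when passing from $(\pi', m')$ back to $(\pi, m)$ with those lost when passing from $(\tilde \pi', \tilde m')$ back to $(\tilde \pi, \tilde m)$, and to check that the ratio of these two losses matches the ratio of the two coefficients, up to sign.

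To carry this out I would fix $(\pi, m) \in M$, let $\{p_1, q_1\}$ and $\{p_2, q_2\}$ be the endpoint pairs of the two alternating paths in the superposition, and split into cases according to the source term of $(\pi, m)$ and the endpoint pairing. Each case determines unambiguously the target term, since $\Phi(\pi, m)$ is in the term specified by which of $\{1, 2, 2n-1, 2n\}$ lies on the cycle that contains $1$ (precisely those vertices have their matching memberships swapped). In every case one extracts the factors contributed by the added edges using Lemma~\ref{lem:weightarc}: a genuine added edge $(a,b)$ brings a factor $A_{ab}$ together with a power of $\la$ governed by the number of interior points $b-a-1$ and the configuration of existing arcs, while a dummy-vertex structure brings only a $\la$-power and a combinatorial sign. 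One then checks that the parity of the total $\la$-exponent difference between the two sides matches the expected $\la$-vs-$1$ discrepancy between the coefficients, and that the $A$-entries being redistributed are exactly those erased by removing the rows and columns that index the target term.

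The main obstacle is the even-length case where dummy vertices must be introduced. There the two added structures are half-arcs $(a, b')$ and $(b', b)$ rather than a single arc $(a, b)$, and they carry no $A$-entries of their own. The matrix entries $A_{p_iq_i}$ responsible for the cancellation must therefore come from arcs of $(\pi, m)$ that get reconfigured by $\Phi$ into arcs of $(\tilde\pi, \tilde m)$ with different endpoints, rather than directly from the added edges. Using the half-nesting convention from Claim~3, one needs to verify that the $\la$-power contributed by the two half-arcs still reproduces the expected $(b-a-1)/2$ accounting of Lemma~\ref{lem:weightarc}, so that the cancellation in the even-length case reduces to the same algebraic check as in the odd-length case.
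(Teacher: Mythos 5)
Your overall architecture agrees with the paper's: use Claims~2 and~3 to reduce the cancellation to a comparison of what the auxiliary edges contribute before they are added and after they are removed, and then split into cases according to the source term and the endpoint pairing of the two paths (this yields the paper's six cases). However, the proposal stops exactly where the content of the lemma begins. The entire substance of the proof is the explicit count, in each of the six cases, of how many crossings and nestings the auxiliary edges create when added to the original pair and how many they destroy when removed from the image pair, together with the verification that the net change is $1$, $\la^{\mp1}$ or $-\la^{\mp1}$ so as to match the ratio of the two coefficients in \eqref{eq:lambdaeq} (for instance, in the case of added edges $(1,2n)$ and $(2,2n-1)$ linking the first and fourth terms, one must compute ``no crossings and $(2n-3)$ nestings added'' versus ``$(2n-2)$ nestings removed,'' giving a net factor $\la^{-1}$ against coefficients $+1$ and $-\la$). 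Your plan defers all of this to ``one then checks that the parity of the total $\la$-exponent difference matches,'' which is precisely the statement to be proved; without these counts the lemma is not established.

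There is also a conceptual confusion that misdirects your effort. The auxiliary edges never contribute matrix entries to any term of \eqref{eq:lambdaeq}: they are pure bookkeeping used to close the paths into cycles, and the products of $A$-entries of $(\pi,m)$ and of $\Phi(\pi,m)$ agree automatically because the flip preserves the multiset of arcs in the superposition. So the worry in your last paragraph about where the entries $A_{p_iq_i}$ ``responsible for the cancellation'' come from in the dummy-vertex case is a non-issue, while the genuine difficulty there --- counting the crossings and nestings of the two half-arcs through the split vertex, using the half-nesting convention --- is exactly what is left unexamined. A further small imprecision: the pair of auxiliary edges is the \emph{same} on the original and image sides; what changes is which of the two matchings each auxiliary edge belongs to after the flip, and it is this redistribution (not a change of edges) that makes their crossing/nesting contributions differ and produces the factor of $\la^{\pm1}$.
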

\begin{proof}
For this final part,
we have to check the six possible cases for the
partial matchings and additional edges that the terms in
Equation~\eqref{eq:lambdaeq} cancel.

In all pictures, the dashed lines are the additional edges and the dotted (red) lines are the lines of the cycle containing 1.

{\em Case 1: Additional edges $(1,2)$ and $(2n-1,2n)$.Between
  $\mathcal M([2n]\setminus\left\{1,2,2n-1,2n\right\}\times
\mathcal M([2n])$ and
  $\mathcal M([2n]\setminus\left\{1,2\right\}\times
\mathcal M([2n]\setminus\left\{2n-1,2n\right\})$
   }

This is the case of Figure~\ref{fig:bij}.
The additional edges cannot be involved in any crossing
or nesting, so adding or removing them does not change the weight at
all.

The two corresponding terms in Equation~\eqref{eq:lambdaeq} have
coefficients $+1$ and $-1$, so they cancel.

\begin{figure}[t]
\begin{center}
\leavevmode
\includegraphics[width=0.40\textwidth]{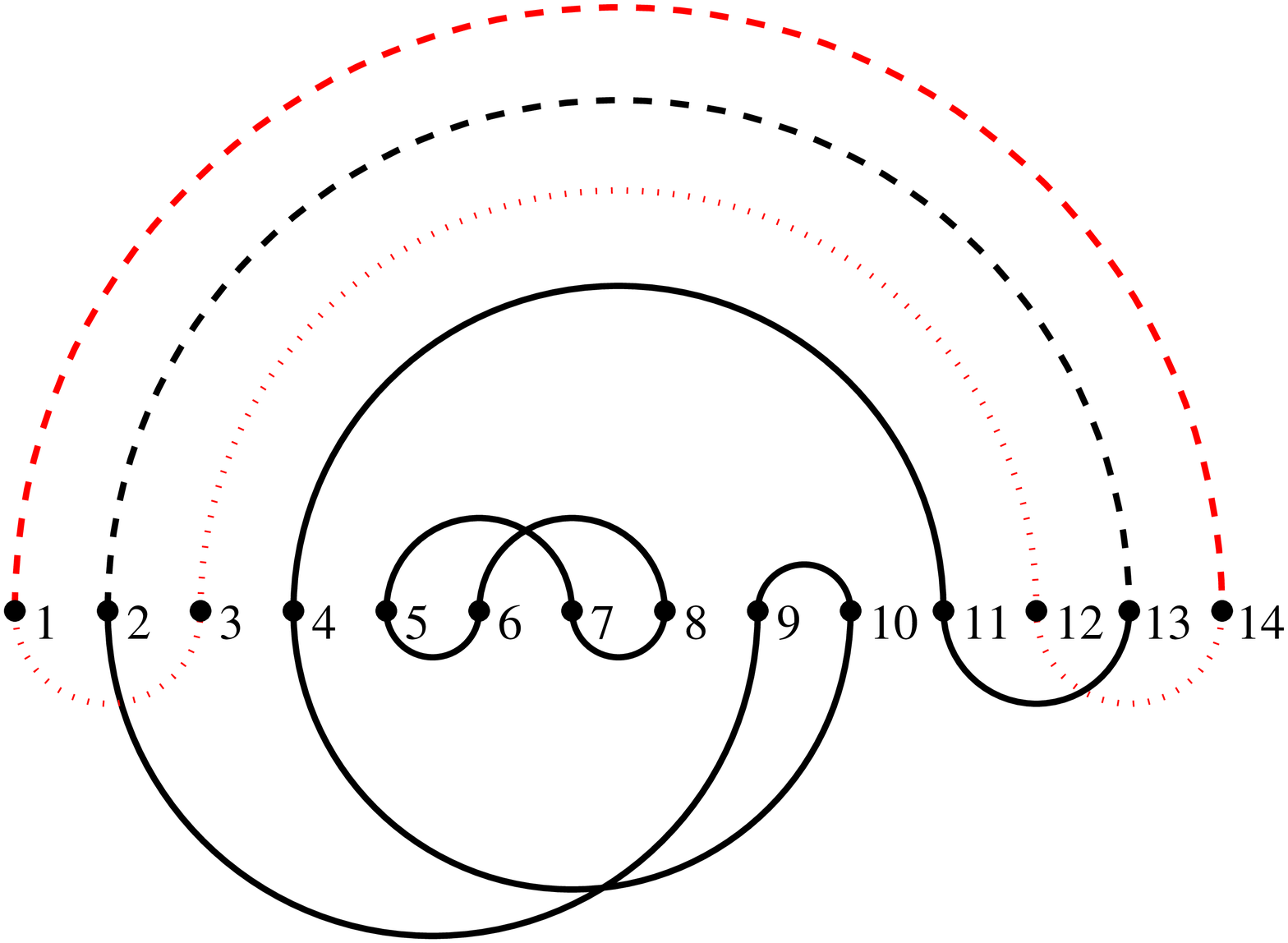}
\includegraphics[width=0.40\textwidth]{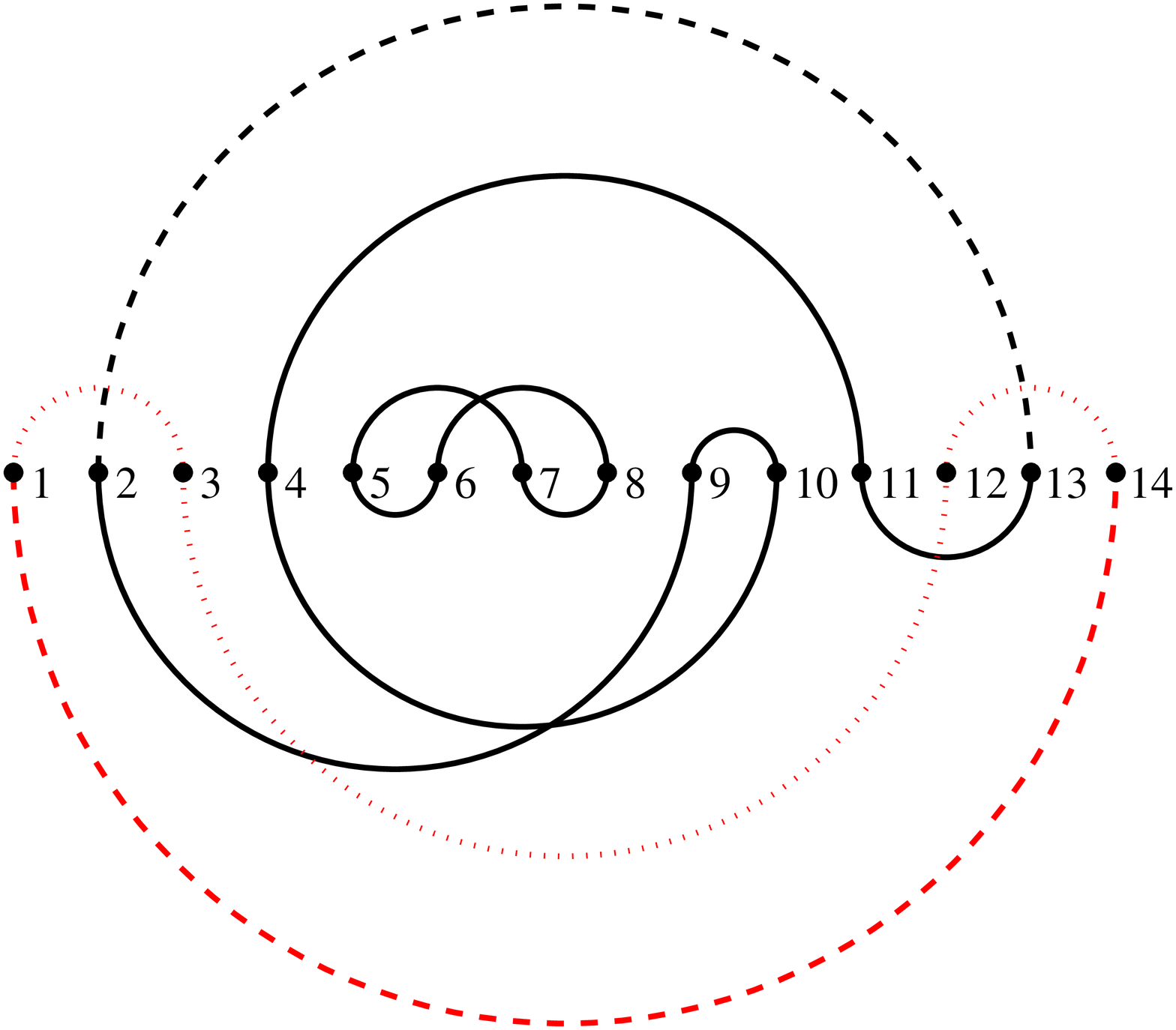}
\end{center}
\caption{The bijection on a pair of ``nesting'' matchings}
\label{fig:bijnest}
\end{figure}

{\em Case 2: Additional edges $(1,2n)$ and $(2,2n-1)$.Between
  $\mathcal M([2n]\setminus\left\{1,2,2n-1,2n\right\}\times
\mathcal M([2n])$ and
  $\mathcal M([2n]\setminus\left\{1,2n\right\}\times
\mathcal M([2n]\setminus\left\{2,2n-1\right\})$
   }

This is the case of Figure~\ref{fig:bijnest}.

Adding the edges will generate no crossings and $(n-1)+(n-2)$ nestings
(this is a consequence of the arguments in the proof of
Lemma~\ref{lem:weightarc}).

Removing them will remove no crossings and $(n-1)+(n-1)$ nestings if
$(1,2n)$ is a pair in matching $\pi$ and it will remove 2 crossings
and $(n-1)+(n-3)$ nestings if it is not.

So, in total the weight change is $\la^{(2n-3)-(2n-2)}=\la^{-1}$.

The two corresponding terms in Equation~\eqref{eq:lambdaeq} have
coefficients $+1$ and $-\la$, so they cancel.

\begin{figure}[t]
\begin{center}
\leavevmode
\includegraphics[width=0.40\textwidth]{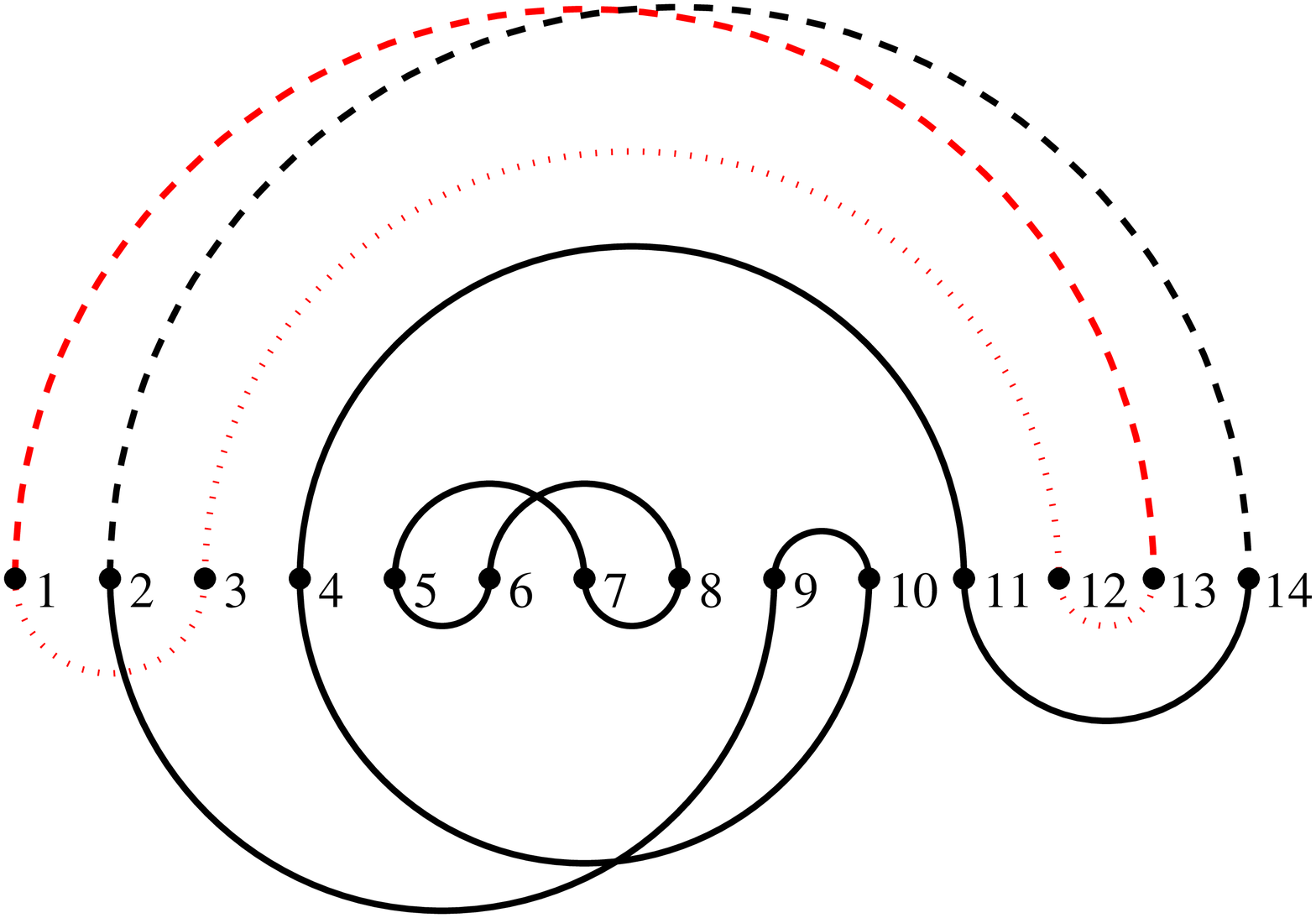}
\includegraphics[width=0.40\textwidth]{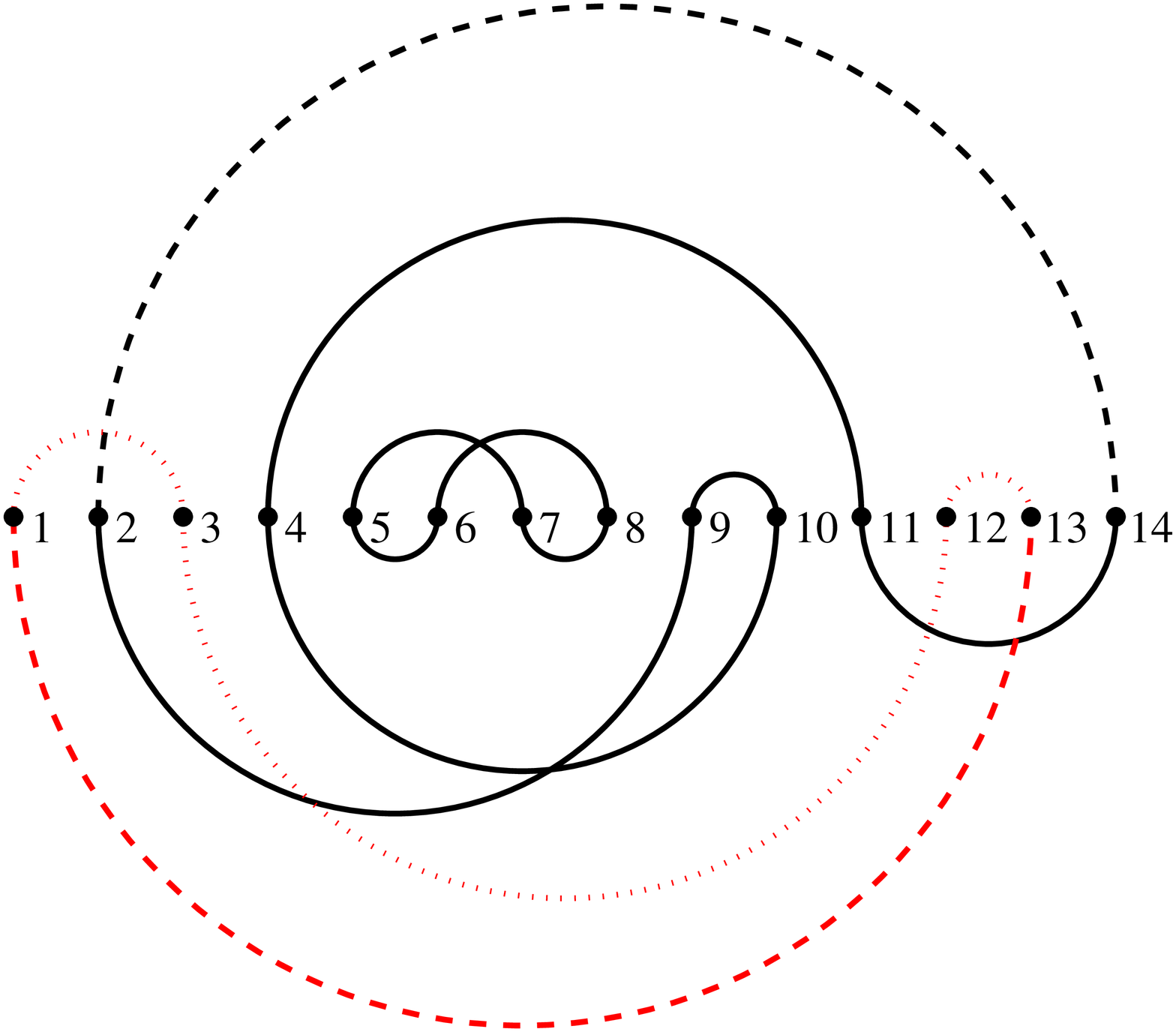}
\end{center}
\caption{The bijection on a pair of ``crossing'' matchings}
\label{fig:bijcross}
\end{figure}

{\em Case 3: Additional edges $(1,2n-1)$ and $(2,2n)$.Between
  $\mathcal M([2n]\setminus\left\{1,2,2n-1,2n\right\}\times
\mathcal M([2n])$ and
  $\mathcal M([2n]\setminus\left\{1,2n-1\right\}\times
\mathcal M([2n]\setminus\left\{2,2n\right\})$
   }

This is the case of Figure~\ref{fig:bijcross}.

Adding the edges will generate one crossing and $2n-4$ nestings.

Removing them will remove two crossings and $(n-2)+(n-2)$ nestings.

So, in total the weight change is $-\la^{(2n-3)-(2n-2)}=-\la^{-1}$.

The two corresponding terms in Equation~\eqref{eq:lambdaeq} have
coefficients $+1$ and $\la$, so they cancel again.

\begin{figure}[t]
\begin{center}
\leavevmode
\includegraphics[width=0.40\textwidth]{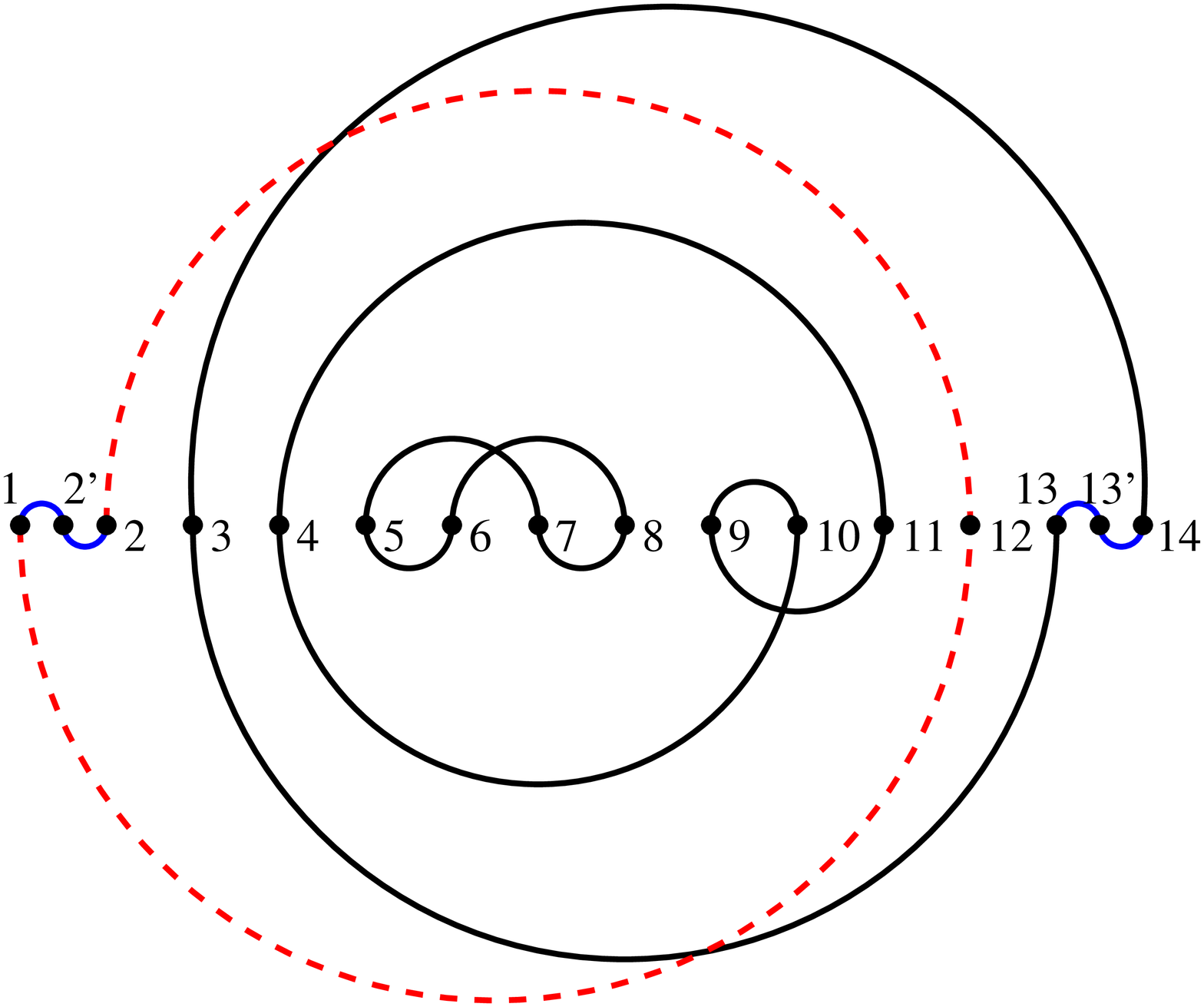}
\includegraphics[width=0.40\textwidth]{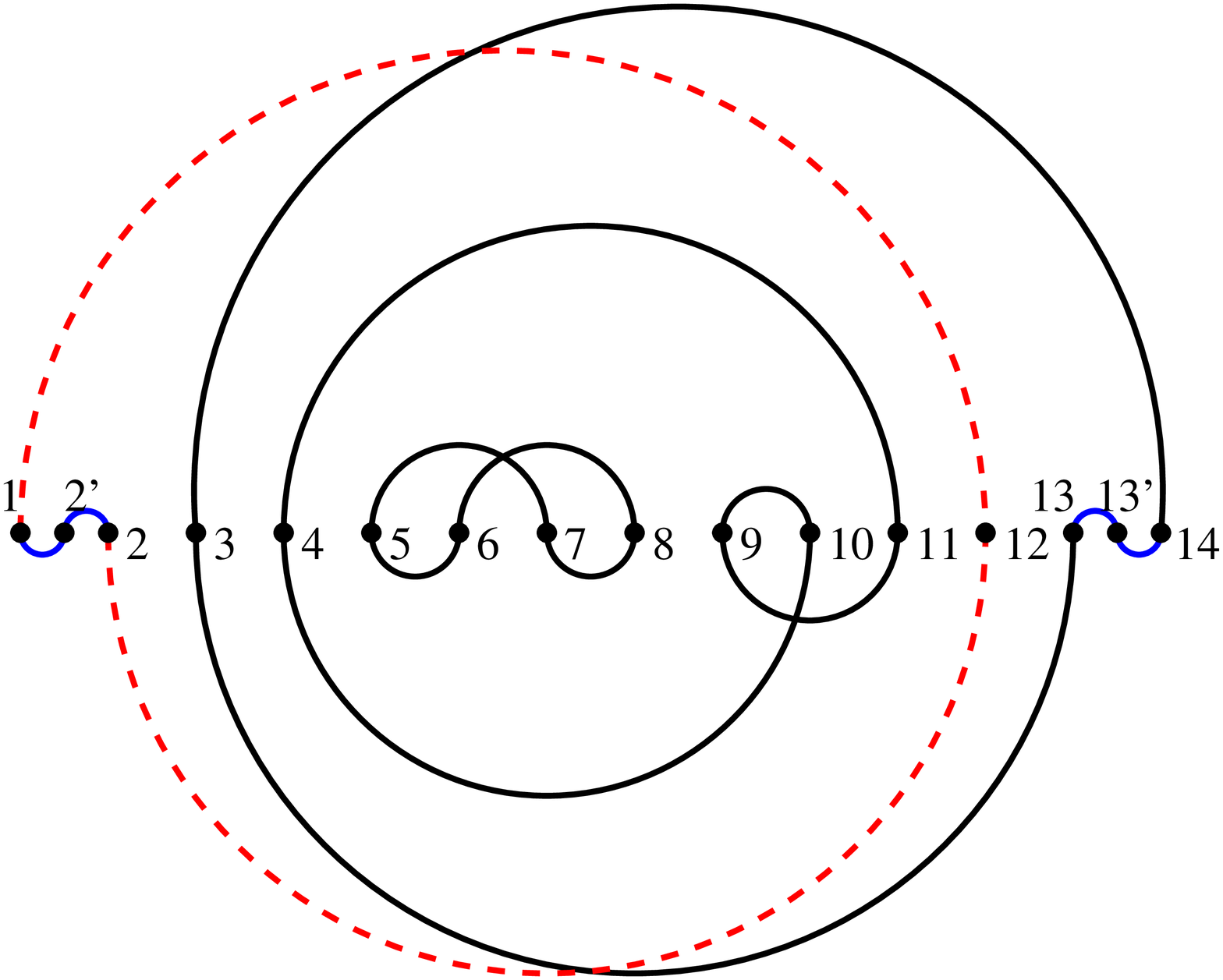}
\end{center}
\caption{The bijection on a pair of right-hand side matchings}
\label{fig:bijright}
\end{figure}

{\em Case 4: Additional edges $(1,2)$ and $(2n-1,2n)$.Between
  $\mathcal M([2n]\setminus\left\{1,2n-1\right\})\times
\mathcal M([2n]\setminus\left\{2,2n\right\})$ and
  $\mathcal M([2n]\setminus\left\{1,2n\right\})\times
\mathcal M([2n]\setminus\left\{2,2n-1\right\})$
   }
This is the case of Figure~\ref{fig:bijright}.

We have to split two vertices and add four edges to get two perfect
matchings.

Adding the edges adds two nestings and no crossings
and removing the edges removes two nestings, so the weight remains
unchanged.

The two corresponding terms in Equation~\eqref{eq:lambdaeq} have
coefficients $\la$ and $-\la$, so they cancel again.

\begin{figure}[t]
\begin{center}
\leavevmode
\includegraphics[width=0.40\textwidth]{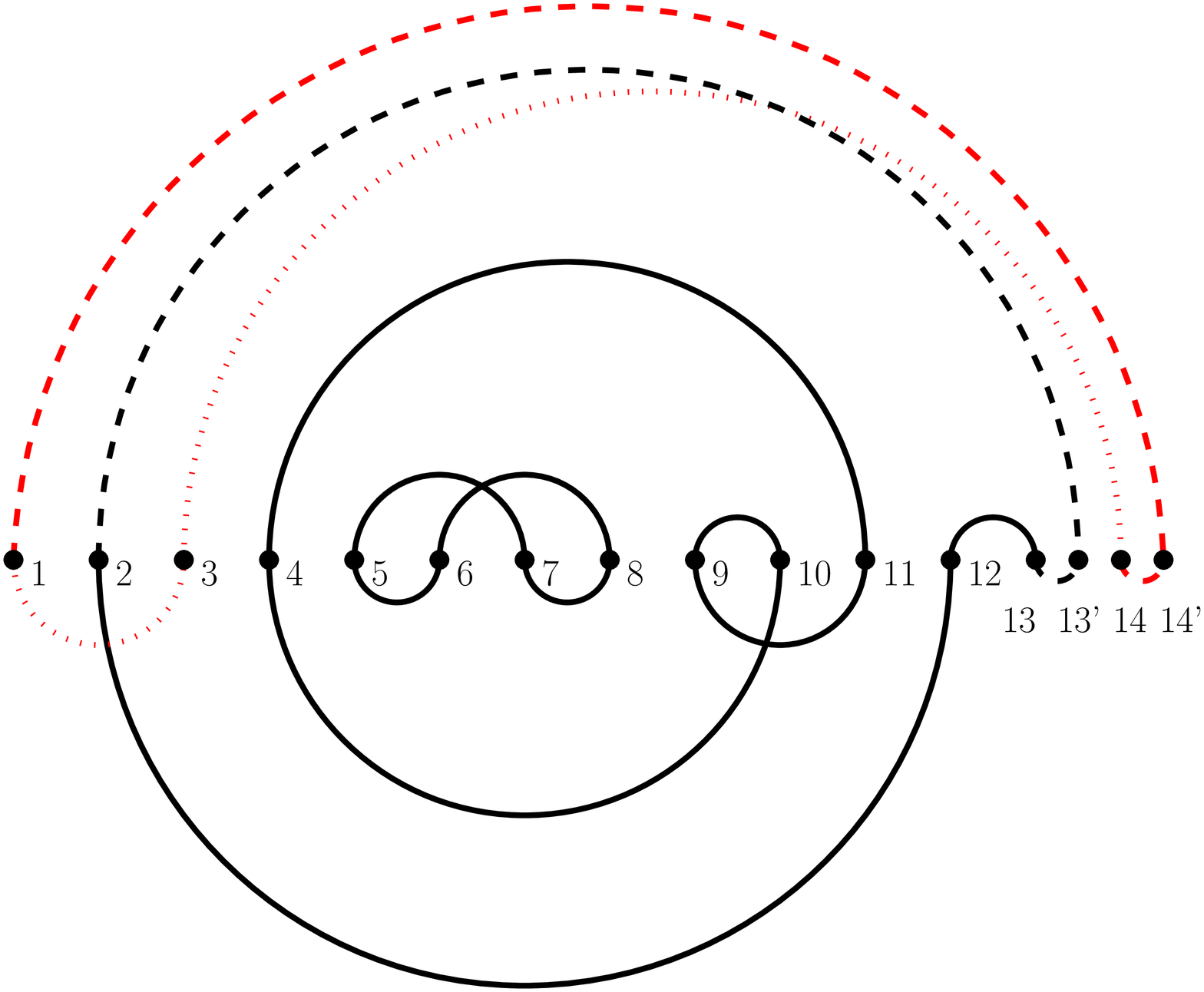}
\includegraphics[width=0.40\textwidth]{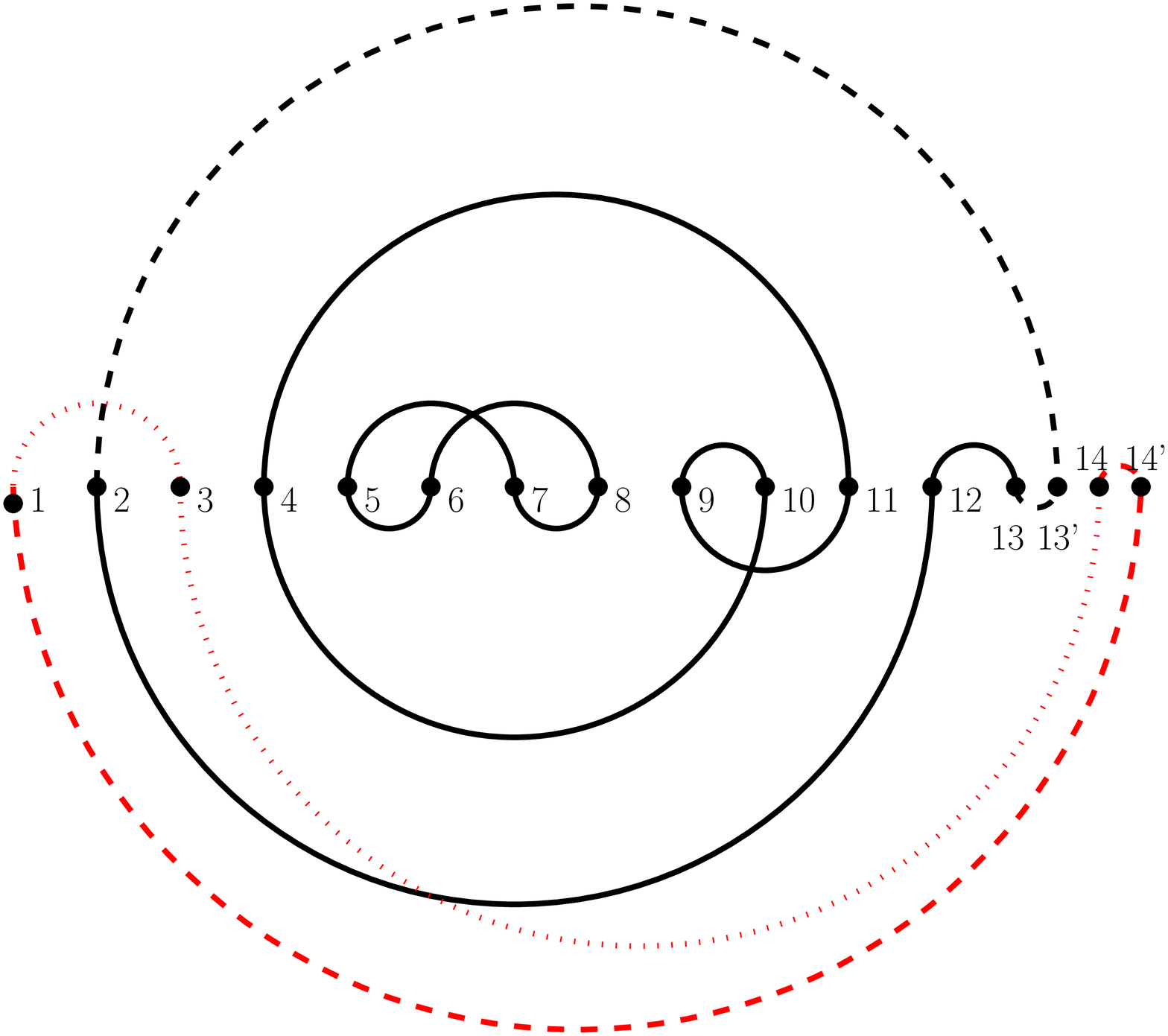}
\end{center}
\caption{The bijection on a pair of right-hand side ``nesting" matchings}
\label{fig:bijrightnest}
\end{figure}

{\em Case 5: Additional edges $(1,2n)$ and $(2,2n-1)$.Between
  $\mathcal M([2n]\setminus\left\{1,2\right\})\times
\mathcal M([2n]\setminus\left\{2n-1,2n\right\})$ and
  $\mathcal M([2n]\setminus\left\{1,2n-1\right\})\times
\mathcal M([2n]\setminus\left\{2,2n\right\})$
   }

This is the case of Figure~\ref{fig:bijrightnest}.

Again, two vertices have to be split.

Adding the edges will generate 1 crossing and $n+(n-2)$ nestings.

Removing them will remove 1 crossing and $(n-2)+(n)+1$
nestings.

So, in total the weight change is $\la^{(2n-1)-(2n)}=\la^{-1}$.

The two corresponding terms in Equation~\eqref{eq:lambdaeq} have
coefficients $-1$ and $\la$, so they cancel again.

\begin{figure}[t]
\begin{center}
\leavevmode
\includegraphics[width=0.40\textwidth]{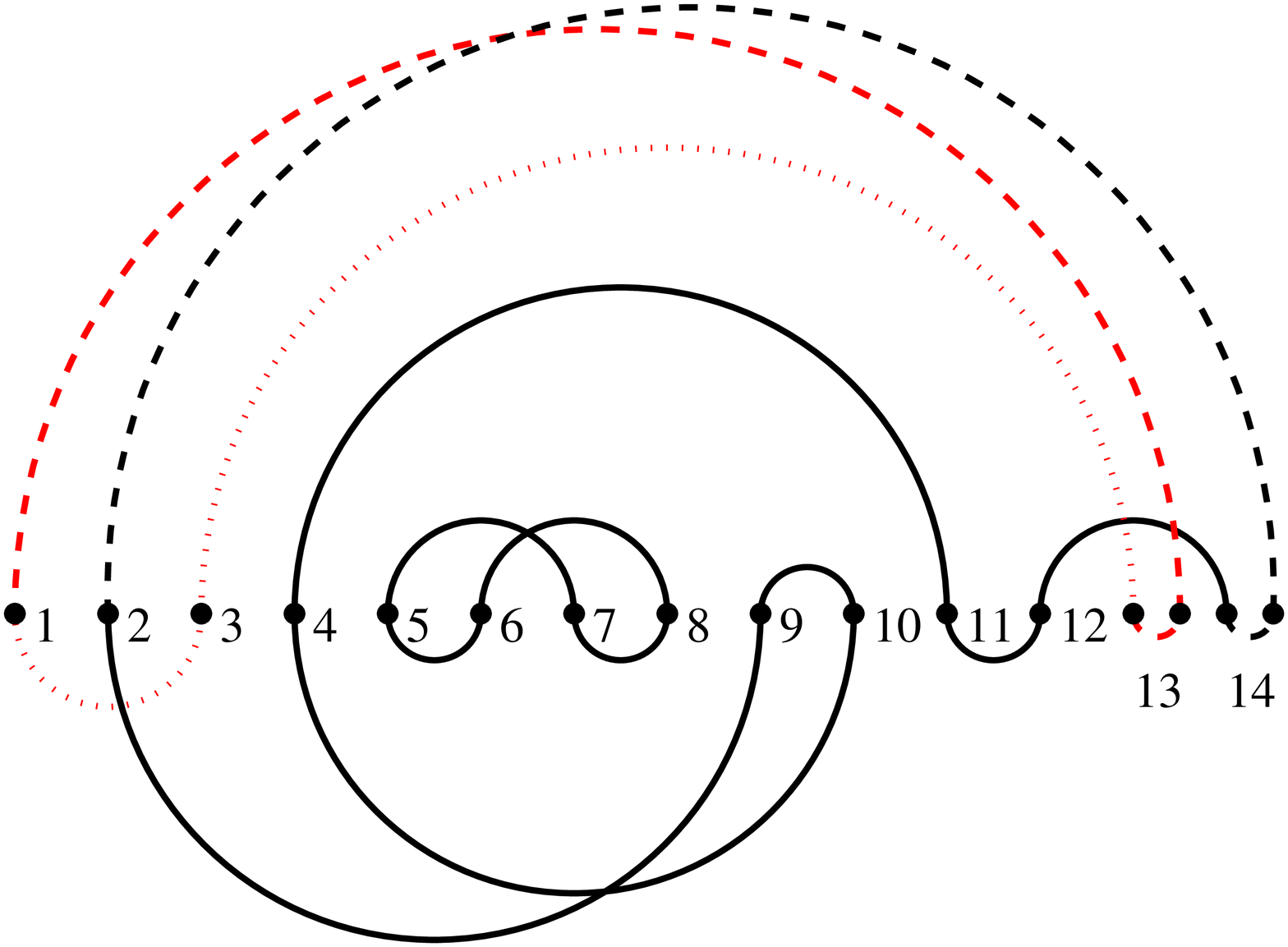}
\includegraphics[width=0.40\textwidth]{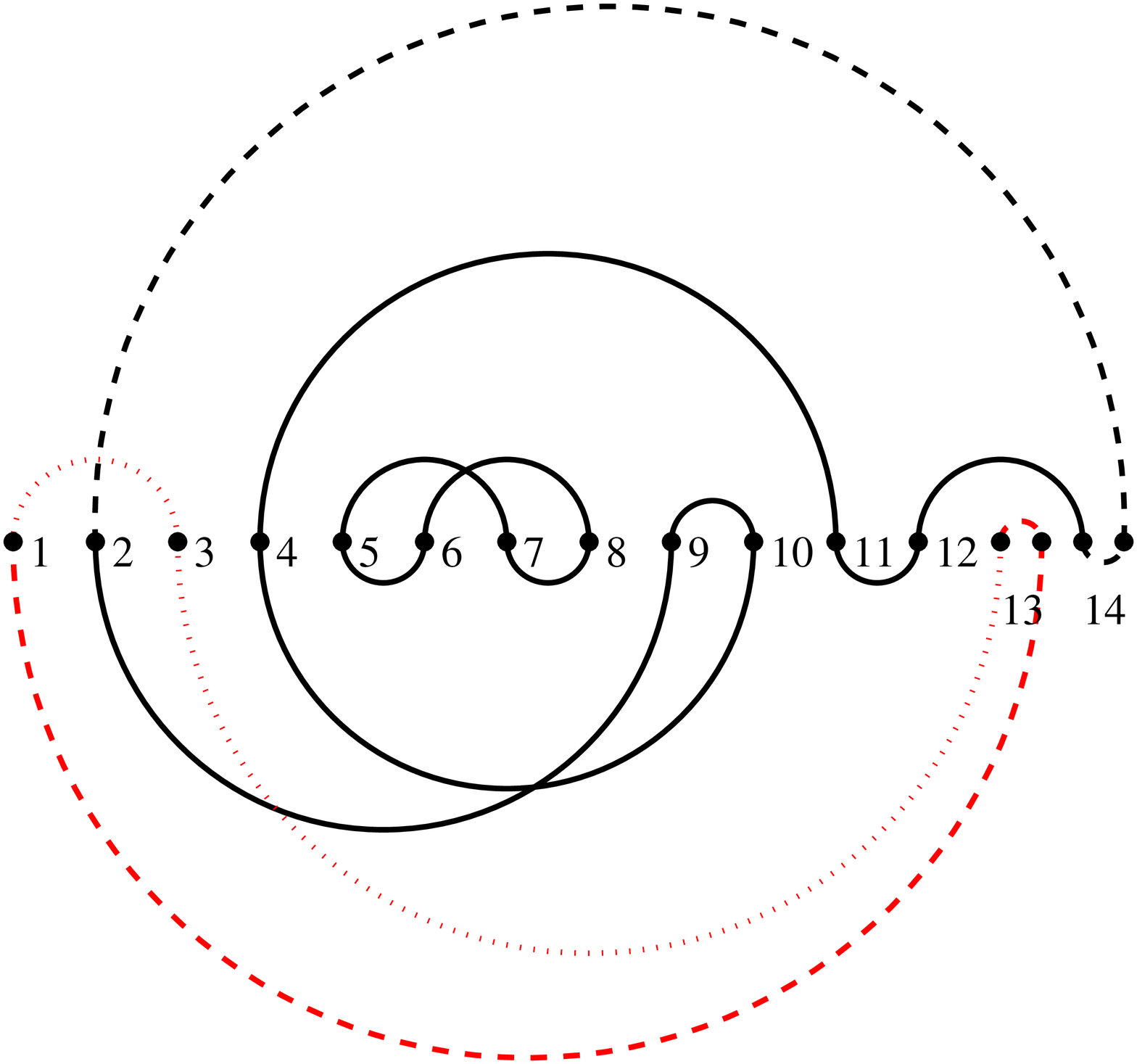}
\end{center}
\caption{The bijection on a pair of right-hand side ``crossing" matchings}
\label{fig:bijrightcross}
\end{figure}

{\em Case 6: Additional edges $(1,2n-1)$ and $(2,2n)$.Between
  $\mathcal M([2n]\setminus\left\{1,2\right\})\times
\mathcal M([2n]\setminus\left\{2n-1,2n\right\})$ and
  $\mathcal M([2n]\setminus\left\{1,2n\right\})\times
\mathcal M([2n]\setminus\left\{2,2n-1\right\})$
   }

This is the case of Figure~\ref{fig:bijrightcross}.

Again, we split two vertices and add four edges.

Adding the edges will generate 2 crossings and $(n-1)+(n-2)$ nestings.

Removing them will remove 1 crossing and $(n-1)+(n-1)+1$ nestings.

So, in total the weight change is $-\la^{(2n-1)-(2n)}=-\la^{-1}$.

The two corresponding terms in Equation~\eqref{eq:lambdaeq} have
coefficients $-1$ and $-\la$, so they cancel again.
\end{proof}


\end{section}

%
\begin{section}{Properties of $\lambda$-Pfaffians}
\label{properties}
In this section we investigate some properties of $\lambda$-Pfaffians.
We first  state and prove the following basic proposition
that gives the relation between $\lambda$-Pfaffians and determinants,
which generalize the classical identity
(see \cite{IO,IW,Ste}).
%

%
%
\begin{prop}
\label{th:fundamental}
Let $n$ be a nonnegative integer, and let $A$ be a square matrix of size $n$.
Then we have
\begin{equation}
\Pf_\lambda\begin{pmatrix}
O_{n}&A\\
-A^T&O_{n}
\end{pmatrix}
=(-\lambda)^{n(n-1)/2}\det A,
\end{equation}
where $O_n$ is the zero matrix of size $n$.
and $A^T$ stands for the transpose of $A$.
\end{prop}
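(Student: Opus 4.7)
The plan is to deduce the proposition directly from Theorem~\ref{th:lambdarec} by restricting the weighted sum over perfect matchings to the very sparse block-antidiagonal matrix $M=\begin{pmatrix}O_n&A\\-A^T&O_n\end{pmatrix}$ and identifying what remains with the determinantal sum over $S_n$.

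First I would apply Theorem~\ref{th:lambdarec} to $M$: since $M_{ij}=0$ unless exactly one of $i,j$ lies in $\{1,\dots,n\}$ and the other in $\{n{+}1,\dots,2n\}$, the only matchings $m\in\mathcal M([2n])$ that contribute a nonzero term are the ``bipartite'' ones, in which every pair has one endpoint in each block. Each such matching is uniquely encoded by a permutation $\sigma\in S_n$ via the pairing $k\leftrightarrow n+\sigma(k)$; after sorting, it reads
\[
m_\sigma=\bigl((1,n+\sigma(1)),(2,n+\sigma(2)),\dots,(n,n+\sigma(n))\bigr),
\]
so the monomial contribution is $\prod_{k=1}^{n}M_{k,\,n+\sigma(k)}=\prod_{k=1}^{n}A_{k,\sigma(k)}$.

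Next I would translate the crossing and nesting statistics on $m_\sigma$ into statistics on $\sigma$. For $i<j$, the pairs $(i,n+\sigma(i))$ and $(j,n+\sigma(j))$ satisfy $i<j<n+\min(\sigma(i),\sigma(j))$ automatically, so they cross precisely when $\sigma(i)<\sigma(j)$ and nest precisely when $\sigma(i)>\sigma(j)$. Hence
\[
\cross(m_\sigma)=\tbinom{n}{2}-\inv(\sigma),\qquad \nest(m_\sigma)=\inv(\sigma),
\]
and the crucial simplification occurs: $\cross(m_\sigma)+\nest(m_\sigma)=\binom{n}{2}$ is \emph{constant} over all bipartite matchings, so the power of $\lambda$ factors out. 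Meanwhile $(-1)^{\cross(m_\sigma)}=(-1)^{\binom{n}{2}}(-1)^{\inv(\sigma)}=(-1)^{\binom{n}{2}}\sgn(\sigma)$.

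Finally I would assemble the sum:
\[
\Pf_\lambda(M)=(-1)^{\binom{n}{2}}\lambda^{\binom{n}{2}}\sum_{\sigma\in S_n}\sgn(\sigma)\prod_{k=1}^{n}A_{k,\sigma(k)}=(-\lambda)^{n(n-1)/2}\det A,
\]
which is the claimed identity; the $n=0$ case follows trivially from the initial value $\Pf_\lambda(())=1$. There is no real obstacle here: the only step requiring any care is the sign/inversion bookkeeping in the cross/nest computation, and the main conceptual point is the observation that on bipartite matchings the $\lambda$-exponent is independent of $\sigma$, which is exactly what allows the deformation to collapse back to a clean power of $-\lambda$ times an ordinary determinant.
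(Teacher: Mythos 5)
Your proposal is correct and follows essentially the same route as the paper: restrict to bipartite matchings, observe that any two contributing pairs either cross or nest so that $\cross+\nest=\binom{n}{2}$ is constant and $\lambda^{\binom{n}{2}}$ factors out, and then recover $\det A$ with sign $(-1)^{n(n-1)/2}$. The only difference is that the paper cites the classical identity $\Pf\begin{pmatrix}O_n&A\\-A^T&O_n\end{pmatrix}=(-1)^{n(n-1)/2}\det A$ at the last step, whereas you rederive it inline via the bijection $m_\sigma\leftrightarrow\sigma$ and the correspondence between nestings and inversions, which makes your argument self-contained.
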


\begin{proof}
Each number in $\{1,2,\dots,n\}$ has to be matched with a number in $\{n+1,n+2,\dots,2n\}$ to contribute a non-zero term to the $\lambda$-Pfaffian. Any two pairs in the matching are either crossing or nesting because both start points come before both end points. So, every term has a weight of $\la^{n(n-1)/2}$. After taking out this common factor, we obtain exactly the ordinary Pfaffian which is well-known to be $(-1)^{n(n-1)/2} \det A$ for this matrix.
\end{proof}

There are several basic properties of Pfaffians.
For example, 
the following identity is well known for $\lambda=1$: 
\begin{equation}
\Pf_{\lambda}(1)_{1\leq i<j\leq 2n}=1.
\label{eq:all1}
\end{equation}
This identity is obvious from the recurrence \eqref{eq:lambdarec} by induction.
Note that \eqref{eq:all1} also follows from the following  known continued fraction expansion~\cite{KZ}:
\begin{align*}
&
\sum_{n=0}^\infty\biggl(
\sum_{m \in \mathcal M(2n)} p^{cross(m)}q^{nest(m)}
\biggr)t^n
=\frac{1}{1-\displaystyle
\frac{[1]_{p,q}\,t}{1-\displaystyle
\frac{[2]_{p,q}\,t}{1-\displaystyle
\frac{[3]_{p,q}\,t}{\ddots}}}},
\end{align*}
because, if we put $p=-\lambda$ and $q=\lambda$ in this continued fraction,
it becomes $\frac1{1-t}$.
Here  $[k]_{p,q}=\frac{p^k-q^k}{p-q}$.
In fact, we obtain a more general formula as follows.
\begin{prop}
\label{th:xiyj}
Let $x_i$ and $y_i$ ($i=1,2,\dots$) be indeterminates, 
and let  $n$ be a positive integer.
Then we have
\begin{equation}
\Pf_{\lambda}\biggl(x_iy_j\biggr)_{1\leq i<j\leq2n} 
=\prod_{i=1}^{n}x_{2i-1}y_{2i}.
\label{eq:xiyj}
\end{equation}
\end{prop}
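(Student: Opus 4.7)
The proof will proceed by induction on $n$ using the defining recurrence~\eqref{eq:lambdarec}. The base case $n=1$ is immediate: the formula produces $x_1y_2$, matching the initial value $\Pf_\la\begin{pmatrix}0&x_1y_2\\-x_1y_2&0\end{pmatrix}=x_1y_2$ from the definition.

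For the inductive step, I observe that each of the six submatrices appearing on the right-hand side of \eqref{eq:lambdarec}, as well as $A^{(1,2,2n-1,2n)}$ on the left, is itself of the form $(x_iy_j)_{i<j}$ on the remaining sorted indices. Since Theorem~\ref{th:lambdarec} shows that the $\la$-Pfaffian depends only on the matrix entries (and not on how the indices are labelled), the inductive hypothesis applies after relabeling, and each of these seven Pfaffians evaluates to an explicit monomial: for a submatrix whose remaining sorted indices are $j_1<\cdots<j_{2m}$, the value is $\prod_{k=1}^{m}x_{j_{2k-1}}y_{j_{2k}}$.

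The heart of the argument is the observation that the two $\la$-weighted terms cancel, i.e.
$$\Pf_\la(A^{(1,2n-1)})\,\Pf_\la(A^{(2,2n)})=\Pf_\la(A^{(1,2n)})\,\Pf_\la(A^{(2,2n-1)}).$$
Once this monomial identity is verified, \eqref{eq:lambdarec} reduces to a product equation for $\Pf_\la(A)\cdot\Pf_\la(A^{(1,2,2n-1,2n)})$, and since the second factor is a nonzero monomial in the indeterminates, a single division gives $\Pf_\la(A)=\prod_{i=1}^{n}x_{2i-1}y_{2i}$.

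The main obstacle is the bookkeeping needed to establish the displayed monomial cancellation. One has to determine the canonical pairing of the remaining sorted indices for each of the four deletions $\{1,2n-1\}$, $\{2,2n\}$, $\{1,2n\}$, and $\{2,2n-1\}$, paying particular attention to the boundary indices $2n-2$, $2n-1$, $2n$ where the pairings differ slightly. After writing out the four products, both sides are seen to contain the same collection of $x_i$'s and $y_j$'s, and the recurrence collapses as claimed.
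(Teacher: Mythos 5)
Your proof is correct, but it takes a different route from the one printed in the paper. You argue by induction on $n$ directly from the defining recurrence~\eqref{eq:lambdarec}: after checking that every submatrix $A^{(S)}$ of $A=(x_iy_j)$ is again of the same product form on the remaining sorted indices, the induction hypothesis turns all seven Pfaffians into explicit monomials, and indeed one verifies that
$$\Pf_\la(A^{(1,2n-1)})\,\Pf_\la(A^{(2,2n)})=\Pf_\la(A^{(1,2n)})\,\Pf_\la(A^{(2,2n-1)})
=x_1x_2\,y_3^2\Bigl(\textstyle\prod_{i=2}^{n-2}x_{2i}y_{2i+1}\Bigr)^2x_{2n-2}^2\,y_{2n-1}y_{2n},$$
so the two $\la$-terms cancel and a division by the nonzero monomial $\Pf_\la(A^{(1,2,2n-1,2n)})$ finishes the step. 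The paper explicitly notes that this derivation from~\eqref{eq:lambdarec} is possible but chooses instead to give a combinatorial proof: starting from the matching expansion of Theorem~\ref{th:lambdarec}, it exhibits a weight-preserving sign-reversing involution on the non-trivial matchings (swap the partners of $2i-1$ and $2i$ at the first index where the matching deviates from the trivial one, which exchanges exactly one crossing for one nesting and hence flips the sign while preserving $\la^{\cross+\nest}$ and the monomial), leaving only the trivial matching's term. Your approach is more elementary in that it uses only the recursive definition and never needs Theorem~\ref{th:lambdarec}; in fact your appeal to that theorem to justify relabelling is unnecessary, since the submatrices are literally skew-symmetric matrices of the same shape and the indeterminates can simply be renamed. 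The paper's involution, by contrast, gives combinatorial insight into which matchings survive and why, in the spirit of the rest of Section~2. Both proofs are complete and valid.
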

Proposition~\ref{th:xiyj} can be derived directly from \eqref{eq:lambdarec},
but here we present another combinatorial proof using an involution.

\begin{proof}
As usual, we write a perfect matching $m$ as $\{\{m_1,m_2\}, \{m_3,m_4\},\dots,
\{m_{2n-1},m_{2n}\}\}$ with $m_{2i-1} <m_{2i}$ for all $i$ and $m_1 <
m_3 < m_5 < \dots < m_{2n-1}$.

By the definition of the $\la$-Pfaffian, the left-hand side is a sum
over all perfect matchings $m$  with the sign $(-1)^{\text{cross(m)}}$
and the weight $\la^{cross(m) + nest(m)} \prod_{i=1}^n x_{m_{2i-1}}
y_{m_{2i}}$.

The expression on the right-hand side is exactly the summand
corresponding to the trivial matching $\{\{1,2\},\{3,4\},\dots
\{2n-1,2n\}\}$. We will describe a weight-preserving sign-reversing
involution to show that all other terms in the Pfaffian disappear.

Let $m$ be a matching different than the trivial one. Let $2i$ be the
first index $k$ such that $m_k \not= k$. Then $m_{2i-1} = 2i-1$ and
$m_{2i+1} = 2i$.
Now define a new matching by interchanging the values of $m_{2i}=a$ and $m_{2i+2}=b$.
This action is clearly an involution on the non-trivial matchings.

Any crossing or nesting of a pair not involving $2i-1$ with the pair $(2i-1,a)$
is also a crossing or nesting with the pair $(2i, a)$. This shows that the only
affected crossings or nestings are the ones between $(2i-1,a)$ and
$(2i,b)$. But in this case, we will just replace one crossing by a
matching or vice versa.

Therefore, the product of $x$'s and $y$'s and the sum $cross(m) + nest(m)$
is clearly invariant, while the number of crossings changes by one which
reverses the sign as desired.
\end{proof}

%
%
%
%
%
%
A Vandermonde--type identity for the Pfaffian is as follows:
\[
\Pf\biggl(\frac{(x_i^n-x_j^n)^2}{x_i-x_j}\biggr)_{1\leq i<j\leq2n}
=\prod_{1\leq i<j\leq2n}(x_i-x_j).
\]
We have not found a generalization of this formula for our $\lambda$-Pfaffian,
but we present the following theorem,
which generalizes a special case of this identity.
%
%
%
\begin{theorem} \label{xydiff}
For $n\ge 1$ we have
$$\Pf_{\la} (x_i -y_j)_{1\leq i<j\leq2n} = \prod_{k=1}^{n-1}(1-\la \sigma_{2k})
\prod_{i=1}^n (x_{2i-1}-y_{2i}),$$
where $\sigma_k$ is the (linear and multiplicative) operator with
$\sigma_k(x_{k+1})=y_{k+1}$, $\sigma_k(y_k)=x_k$, $\sigma_k(x_i)=x_i$ for $i\not=k+1$
and $\sigma_k(y_i)=y_i$ for $i\not=k$.
\end{theorem}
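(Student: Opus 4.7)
The plan is to prove the identity by induction on $n$, using the defining recurrence \eqref{eq:lambdarec} of the $\la$-Pfaffian. The base case $n=1$ is immediate; for $n=2$, summing the three matchings of $[4]$ according to Theorem~\ref{th:lambdarec} yields
\[
(x_1-y_2)(x_3-y_4)-\la(x_1-y_3)(x_2-y_4)+\la(x_1-y_4)(x_2-y_3)
=(x_1-y_2)(x_3-y_4)-\la(x_1-x_2)(y_3-y_4),
\]
which equals $(1-\la\sigma_2)(x_1-y_2)(x_3-y_4)$, matching the claimed formula.

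For the inductive step, substitute $A_{ij}=x_i-y_j$ into \eqref{eq:lambdarec}. Every sub-$\la$-Pfaffian on the right corresponds to a submatrix of the same form $(x_i-y_j)$, but indexed by a proper subset of $[2n]$. After relabeling each submatrix's indices to run consecutively from $1$, the induction hypothesis expresses that sub-$\la$-Pfaffian as the stated product formula in the relabeled variables. Rewriting in the original variables, the deletions $\{1,2\}$, $\{2n-1,2n\}$, and $\{1,2,2n-1,2n\}$ merely reindex the $\sigma_{2k}$'s and shift the base product of binomials, whereas the ``cross'' deletions $\{1,2n-1\}$, $\{1,2n\}$, $\{2,2n-1\}$, $\{2,2n\}$ produce operators $\sigma_j$ with $j$ \emph{odd} in the original variables, acting on correspondingly shifted binomials such as $(x_2-y_3)(x_4-y_5)\cdots(x_{2n-2}-y_{2n})$.

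The problem then reduces to verifying a polynomial identity in the $x_i$'s, $y_j$'s, and $\la$. The key structural observation is that $\sigma_j$ and $\sigma_{j'}$ commute whenever $j\ne j'$, since they act on disjoint pairs of variables; so each operator product $\prod(1-\la\sigma_{*})$ expands as a signed sum over subsets and the identity reduces to coefficient-by-coefficient comparison of monomials in the $x$'s and $y$'s. The main obstacle is the bookkeeping needed across the six sub-cases with their distinct relabelings. A cleaner alternative, in the spirit of the proof of Proposition~\ref{th:xiyj}, is to expand both sides of the theorem directly---the left side via Theorem~\ref{th:lambdarec} combined with the distribution $(x_a-y_b)=x_a+(-y_b)$ of each edge's factor, and the right side via the operator-subset expansion---and then use a sign-reversing involution on the resulting pairs (matching, half-edge labeling) to annihilate every monomial whose support does not hit each pair $\{2i-1,2i\}$ in exactly one vertex; the surviving ``good'' monomials can then be matched directly to the $(S,T)$ terms on the right-hand side.
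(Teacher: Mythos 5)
Your overall strategy---induction on $n$ via the recurrence \eqref{eq:lambdarec}, with the sub-$\la$-Pfaffians evaluated by the induction hypothesis after relabeling---is exactly the route the paper takes, and your base cases and your observation that the ``cross'' deletions produce odd-indexed operators acting on the shifted binomials $(x_2-y_3)(x_4-y_5)\cdots$ are correct. However, the proposal stops precisely where the actual work begins. After substituting the six induction-hypothesis expressions into \eqref{eq:lambdarec}, one still has to prove a nontrivial operator identity; the paper does this by showing that the left-hand product equals the first product on the right (so these two terms cancel) and that the second right-hand product equals the third (so those cancel as well), and each of these equalities requires a concrete manipulation: isolating the factors $e_1,e_3,e_{2n-3},e_{2n-1}$ on which the extremal operators $\tau_2,\tau_{2n-2}$ act nontrivially, and using the identity $(a-c)(b-d)-(a-d)(b-c)=(a-b)(c-d)$ for the odd-indexed terms. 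Your proposal replaces this step with ``reduces to coefficient-by-coefficient comparison of monomials,'' and the suggested ``cleaner alternative'' (a sign-reversing involution on pairs consisting of a matching and a half-edge labeling, matched to undefined ``$(S,T)$ terms'') is too vague to be checked; neither constitutes a proof of the inductive step.

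There is also a concrete error in the one structural claim you do make: it is \emph{not} true that $\sigma_j$ and $\sigma_{j'}$ commute whenever $j\ne j'$. For instance $\sigma_3\sigma_2(y_3)=\sigma_3(y_3)=x_3$ while $\sigma_2\sigma_3(y_3)=\sigma_2(x_3)=y_3$; more generally $\sigma_j$ and $\sigma_{j+1}$ fail to commute because the image of one contains a variable moved by the other, even though the variables they \emph{move} are disjoint. The paper is careful to state only that the $\tau_k$ commute when their indices do not differ by $1$. Within any single product in the identity the indices all have the same parity, so the subset expansion you invoke is salvageable there, but the inductive step mixes even- and odd-indexed operators across the four terms of \eqref{eq:lambdarec}, so any argument that silently permutes them is unsound as written. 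To complete the proof you need to either carry out the two cancellations explicitly (as the paper does) or give a genuinely precise description of the claimed involution.
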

%
%
%
%
%
If we set $y_i=x_i$ for all $i$ in Theorem~\ref{xydiff}, all operators
act like the identity and for $n\ge 1$, we get the following result:
%
%
\begin{cor}
Let $n$ be a positive integer, 
and let $x_i$ ($i=1,2,\dots,2n$) be indeterminates. 
Then we have
\begin{equation}
\Pf_{\lambda}\biggl(x_i-x_j\biggr)_{1\leq i<j\leq2n} 
=(1-\lambda)^{n-1}\prod_{i=1}^{n}(x_{2i-1}-x_{2i}).
\label{eq:xi-xj}
\end{equation}
\end{cor}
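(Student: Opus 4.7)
The plan is to derive this corollary as an almost immediate specialization of Theorem~\ref{xydiff} under the substitution $y_i=x_i$ for all $i$, as already indicated by the authors. The essential issue is to verify that, despite the operators $\sigma_{2k}$ being defined before any such substitution, they do in fact act as the identity once we pass to the diagonal $y_i=x_i$, so that each factor $(1-\lambda\sigma_{2k})$ contributes a clean $(1-\lambda)$.

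First, I would apply Theorem~\ref{xydiff} and observe that the left-hand side becomes exactly $\Pf_\lambda(x_i-x_j)_{1\le i<j\le 2n}$ under $y_j=x_j$, so we are reduced to evaluating
\[
\left[\prod_{k=1}^{n-1}(1-\lambda\sigma_{2k})\prod_{i=1}^n(x_{2i-1}-y_{2i})\right]_{y_i=x_i}.
\]
Next, I would expand the operator product $\prod_{k=1}^{n-1}(1-\lambda\sigma_{2k})$ into a sum of monomials in the commuting operators $\sigma_{2k}$, giving
\[
\sum_{S\subseteq\{1,2,\dots,n-1\}}(-\lambda)^{|S|}\Bigl(\prod_{k\in S}\sigma_{2k}\Bigr)\prod_{i=1}^n(x_{2i-1}-y_{2i}),
\]
and then I would show that every term, after specializing $y_i=x_i$, equals the diagonal product $\prod_{i=1}^n(x_{2i-1}-x_{2i})$. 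Summing over all $S$ then immediately gives the factor $\sum_S(-\lambda)^{|S|}=(1-\lambda)^{n-1}$, yielding the claimed formula.

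The only real content, then, is the verification that each $\sigma_{2k}$ (and hence any composition of them) acts trivially after the diagonal substitution. For a single $k$, the operator $\sigma_{2k}$ affects only the factors indexed by $i=k$ and $i=k+1$ in $\prod_i(x_{2i-1}-y_{2i})$: it sends the pair
\[
(x_{2k-1}-y_{2k})(x_{2k+1}-y_{2k+2})\quad\longmapsto\quad(x_{2k-1}-x_{2k})(y_{2k+1}-y_{2k+2}),
\]
and under $y_j=x_j$ both sides collapse to $(x_{2k-1}-x_{2k})(x_{2k+1}-x_{2k+2})$; the remaining factors are fixed. Since the $\sigma_{2k}$ for distinct $k$ act on disjoint pairs of variables (except possibly at a shared boundary index, where one can check commutativity directly), the same argument applies to any composition $\prod_{k\in S}\sigma_{2k}$. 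I expect this bookkeeping of indices to be the only place where care is needed; otherwise the argument is a one-line specialization, and this completes the proof.
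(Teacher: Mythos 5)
Your proposal is correct and follows essentially the same route as the paper, whose entire proof is the one-line remark that setting $y_i=x_i$ in Theorem~\ref{xydiff} makes every operator $\sigma_{2k}$ act as the identity, so each factor $(1-\lambda\sigma_{2k})$ contributes $(1-\lambda)$. Your explicit check that each monomial $\prod_{k\in S}\sigma_{2k}$ applied to $\prod_{i}(x_{2i-1}-y_{2i})$ collapses to $\prod_{i}(x_{2i-1}-x_{2i})$ on the diagonal, giving $\sum_{S}(-\lambda)^{|S|}=(1-\lambda)^{n-1}$, is precisely the detail the paper leaves implicit.
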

\begin{proof}[of Theorem~\ref{xydiff}]
We prove the identity by induction using the recurrence formula for the
$\la$-Pfaffian in Theorem~\ref{th:lambdarec}.

For $n=1$, the identity becomes $x_1-y_2 = x_1 - y_2$.
For $n=2$, the identity becomes $(x_1-y_2)(x_3-y_4) -
\la(x_1-y_3)(x_2-y_4) + \la (x_1-y_4)(x_2-y_3) = (1-\la s_2)
(x_1-y_2)(x_3-y_4)$
 which is trivial to check (and contains an instance of the identity
$(a-c)(b-d) - (a-d)(b-c) = (a-b)(c-d)$ that we will use again below).

Now, we introduce the notation $\tau_k=(1-\la \sigma_k)$ and $e_k=x_k-y_{k+1}$
and we note that the operator $T_k$ commute if their indices do not have
difference 1. Furthermore, the operator $\tau_k$ acts trivially on all
factors different from $e_{k-1}$  and $e_{k+1}$.

For the recurrence, we have to check for $n\ge 3$:

\begin{multline*}
\biggl(\prod_{k=1}^{n-1} \tau_{2k} \prod_{i=1}^n e_{2i-1}\biggr)
\biggl(\prod_{k=2}^{n-2} \tau_{2k} \prod_{i=2}^{n-1} e_{2i-1}\biggr) \\
\hskip-6cm=\biggl(\prod_{k=2}^{n-1} \tau_{2k} \prod_{i=2}^n e_{2i-1}\biggr)
\biggl(\prod_{k=1}^{n-2} \tau_{2k} \prod_{i=1}^{n-1} e_{2i-1}\biggr) \\
-\la \biggl(\prod_{k=1}^{n-2} \tau_{2k+1}  \prod_{i=1}^{n-2} e_{2i}
(x_{2n-2} - y_{2n})\biggr) \biggl(\prod_{k=1}^{n-2} \tau_{2k+1} (x_1 -y_3)
\prod_{i=2}^{n-1} e_{2i}\biggr) \\
+\la \biggl(\prod_{k=1}^{n-2} \tau_{2k+1}  \prod_{i=1}^{n-1} e_{2i} \biggr)
\biggl(\prod_{k=1}^{n-2} \tau_{2k+1} (x_1 -y_3) \prod_{i=2}^{n-2} e_{2i}
(x_{2n-2} -y_{2n})\biggr).
\end{multline*}

But we have
\begin{multline*}
\biggl(\prod_{k=1}^{n-1} \tau_{2k} \prod_{i=1}^n e_{2i-1}\biggr)
\biggl(\prod_{k=2}^{n-2} \tau_{2k} \prod_{i=2}^{n-1} e_{2i-1}\biggr)
=\biggl(\prod_{k=2}^{n-2} \tau_{2k}    \prod_{i=3}^{n-3} e_{2i-1}
\prod_{i=2}^{n-1} e_{2i-1}
\biggl(\tau_2 \tau_{2n-2} e_{1}e_3 e_{2n-3}e_{2n-1}\biggr) \biggr)\\
=\biggl(\prod_{k=2}^{n-1} \tau_{2k} \prod_{i=2}^n e_{2i-1}\biggr)
\biggl(\prod_{k=1}^{n-2} \tau_{2k} \prod_{i=1}^{n-1} e_{2i-1}\biggr).
\end{multline*}

and

\begin{multline*}
\biggl(\prod_{k=1}^{n-1} \tau_{2k+1}  \prod_{i=1}^{n-2} e_{2i}   (x_{2n-2} -
y_{2n})\biggr) \biggl(\prod_{k=1}^{n-1} \tau_{2k+1} (x_1 -y_3)
\prod_{i=2}^{n-1} e_{2i}\biggr) \\
=\biggl(\prod_{k=1}^{n-1} \tau_{2k+1}  \prod_{i=1}^{n-2} e_{2i}   (x_{2n-2}
- y_{2n})  (x_1 -y_3) \prod_{i=2}^{n-1} e_{2i}\biggr) \\
=\biggl(\prod_{k=1}^{n-1} \tau_{2k+1}  \prod_{i=1}^{n-1} e_{2i} \biggr)
\biggl(\prod_{k=1}^{n-1} \tau_{2k+1} (x_1 -y_3) \prod_{i=2}^{n-2} e_{2i}
(x_{2n-2} -y_{2n})\biggr).
\end{multline*}

Therefore, in the recurrence the left-hand side cancels with the first
term on the right-hand side and the second term on the right-hand side
cancels with the third term on the right-hand side.
\end{proof}
%
%
It will be nice to have a combinatorial proof of the above corollary.
\section{Concluding Remarks}

A recent article \cite{Jing-Zhang} proved several identities for the
Pfaffian on the quantum coordinate ring. The coefficient
$(-1)^{cross(m)}q^{\cross(m) + 2 \nest(m)}$ is similar to our
weight. 
However, the matrix elements are non-commuting variables, so
it is not possible to choose special values except for the $q=1$-case
and they give no Dodgson-type formula.

It would be natural to try to introduce two parameters into the recurrence
for the Pfaffian, but we have not found a choice of two parameters that
gives polynomials or Laurent polynomials like the $\lambda$-determinant case in
\cite{RR}, so the question remains open if there is a suitable
deformation of the recurrence.

\end{section}



%
%
%
%

%

\end{document}